\theoremstyle{definition}
\newtheorem{thm}{Theorem}[section]
\newtheorem{lem}[thm]{Lemma}
\newtheorem{cor}[thm]{Corollary}
\newtheorem{prop}[thm]{Proposition}
\newtheorem{conj}[thm]{Conjecture}
\theoremstyle{definition}
\newtheorem{rem}[thm]{Remark}
\newtheorem{definition}[thm]{Definition}
\begin{document}

\title[Chow motives of quasi elliptic surfaces]{Chow motives of quasi elliptic surfaces}

\author{Daiki Kawabe}
\address{Tohoku University, Aoba, Sendai, 980-8578, Japan}
\email{daiki.kawabe.math@gmail.com}

\date{October 24, 2023}
\keywords{Chow motives, quasi elliptic surfaces, uniruled surfaces}
\thanks{2020 Mathematics Subject Classification. Primary: 14J27, Secondary: 14J26}
\maketitle

\begin{abstract}
We prove that the transcendental motive of any quasi elliptic surface is trivial.
To prove this, we focus on the uniruledness of quasi elliptic surfaces.
\end{abstract}

\section{Introduction}
Let $k$ be an algebraically closed field of characteristic $p \geq 0$.
Let $X$ be a smooth projective surface over $k$ and $h(X)$ its Chow motive with $\mathbb{Q}$-coefficients.
Kahn-Murre-Pedrini \cite{KMP} proved that $X$ admits a refined Chow-K$\ddot{\mathrm{u}}$nneth decompositon 
$h(X) \cong \oplus_{i=0}^{4}h_{i}(X)$ with 
\[ h_{2}(X) \cong h_{2}^{alg}(X) \oplus t_{2}(X). \]
The motive $t_{2}(X)$ is called the transcendental motive of $X$.
It is a birational invariant and, for a prime number $l \neq p$, 
\[ H^{*}_{\textit{\'et}}(t_{2}(X)) = H^{2}_{\textit{\'et}}(X, \mathbb{Q}_{l})_{tr} \ \ \ \text{and} \ \ \ \mathrm{CH}^{*}(t_{2}(X)) = T(X)_{\mathbb{Q}}, \]
where $H^{2}_{\textit{\'et}}(X, \mathbb{Q}_{l})_{tr}$ is the transcendental lattice and $T(X)_{\mathbb{Q}}$ is the Albanese kernel.
The motives $h_{i}(X)$ (for $i \neq 2$) and $h_{2}^{alg}(X)$ are well understood, 
but the transcendental motive $t_{2}(X)$ is still mysterious.
For example, there is the following conjecture$:$
\begin{conj} (Conservativity) \label{Bloch conj}
If $H^{*}_{\textit{\'et}}(t_{2}(X)) = 0$, then $t_{2}(X) = 0$.
\end{conj}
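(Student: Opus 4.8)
This is the general Conservativity conjecture (a motivic refinement of Bloch's conjecture); in full generality it would follow from conservativity of the $\ell$-adic realization functor and is open. The plan is not to treat arbitrary $X$, but to verify the conjecture for the surfaces this paper concerns, the quasi elliptic surfaces, for which the hypothesis is automatic. Indeed such a surface $X$ exists only in characteristic $2$ or $3$, and its generic fibre over the base curve is a regular curve that is geometrically a cuspidal cubic; normalising the geometric fibres shows $X$ is uniruled, and this uniruledness makes the transcendental lattice vanish, so $H^{*}_{\textit{\'et}}(t_{2}(X)) = H^{2}_{\textit{\'et}}(X,\mathbb{Q}_{l})_{tr} = 0$ and the hypothesis holds. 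The real content is to upgrade this cohomological vanishing to the motivic identity $t_{2}(X) = 0$ by geometry.

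The key point, and the source of all difficulty, is that the uniruling is \emph{inseparable}. The generic fibre is regular but geometrically cuspidal, so one cannot simply normalise over the base; instead one performs a purely inseparable base change $C \to C_{0}$ of degree a power of $p$, after which the normalised fibres assemble into a $\mathbb{P}^{1}$-bundle over $C$, hence into a ruled surface birational to $\mathbb{P}^{1}\times C$. This furnishes a purely inseparable dominant rational map from $\mathbb{P}^{1}\times C$ onto $X$, of degree a power of $p$. In particular $X$ need not be birational to $\mathbb{P}^{1}\times C$---a properly quasi elliptic surface has Kodaira dimension $1$---so birational invariance of $t_{2}$ alone cannot conclude; the inseparable degree must be inverted, which is exactly where $\mathbb{Q}$-coefficients are indispensable. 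Resolving the indeterminacy of this rational map produces a smooth projective surface $S$, birational to $\mathbb{P}^{1}\times C$, with a dominant generically finite morphism $g : S \to X$ of degree $d$ a power of $p$.

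I would then compute $t_{2}$ of the source. Since $h(\mathbb{P}^{1}\times C)\cong h(\mathbb{P}^{1})\otimes h(C)$ and $h(\mathbb{P}^{1})$ is a sum of the unit and Lefschetz motives, every Chow--K\"unneth summand in weight two is algebraic; thus $h_{2}(\mathbb{P}^{1}\times C)=h_{2}^{alg}(\mathbb{P}^{1}\times C)$ and $t_{2}(\mathbb{P}^{1}\times C)=0$. As $t_{2}$ is a birational invariant it is unaffected by the blow-ups in $S \to \mathbb{P}^{1}\times C$, so $t_{2}(S)=0$. Finally the projection formula gives $g_{*}g^{*}=d\cdot\mathrm{id}$ with $d\neq 0$ invertible in $\mathbb{Q}$, so $g^{*}$ and $\tfrac{1}{d}g_{*}$ realise $t_{2}(X)$ as a direct summand of $t_{2}(S)$ through the functoriality of the refined decomposition of \cite{KMP}; since $t_{2}(S)=0$, we conclude $t_{2}(X)=0$.

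The main obstacle is entirely in the geometric input of the second step: making the inseparable uniruling precise in characteristic $2$ and $3$---producing the base change $C \to C_{0}$, verifying that the normalised fibres form a $\mathbb{P}^{1}$-bundle, and identifying the resulting map as dominant of degree a power of $p$. The motivic formalism of the third step is robust once this map and its degree are in hand, precisely because the inseparable degree becomes invertible after tensoring with $\mathbb{Q}$; thus the entire weight of the argument rests on establishing uniruledness concretely, as the abstract anticipates.
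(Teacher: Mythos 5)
Your proposal is correct and follows essentially the same route as the paper: restrict to quasi elliptic surfaces, establish purely inseparable uniruledness via Frobenius base change of the base curve plus normalization of the generic fibre and Noether--Tsen (the paper's Theorem \ref{coverling for quasi-elliptic}, with degree exactly $p$ rather than a power of $p$), compute $t_{2}(\mathbb{P}^{1}\times C)=0$ from the Chow--K\"unneth decomposition of a product of curves, and transfer the vanishing along the dominant map using $\Gamma_{\pi}\circ{}^{t}\Gamma_{\pi}=d\cdot\Delta$ with $d$ invertible in $\mathbb{Q}$ (the paper's Lemmas \ref{finite morphism for t_{2}} and \ref{dominant morphism for t_{2}}). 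The only cosmetic difference is that the paper packages the last step as ``$t_{2}$ of a uniruled surface vanishes'' (Theorem \ref{Main 1}) before specializing to the quasi elliptic case.
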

When $k = \mathbb{C}$,
Conjecture \ref{Bloch conj} is equivalent to the famous conjecture of Bloch \cite{Bloch}.
It is known for surfaces over $\mathbb{C}$ of Kodaira dimension $\kappa < 2$,
but is wide open for surfaces of $\kappa = 2$
(e.g. \cite{PW} for some examples of surfaces where Conjecture \ref{Bloch conj} is proved).\\
\indent In this paper, we prove Conjecture \ref{Bloch conj} for quasi-elliptic surfaces, 
which can exist in characteristic $2$ and $3$, only.
More precisely, the purpose of this paper is to prove the following$:$
\begin{thm} \label{Intro theorem}
Let $f : X \rightarrow C$ be a quasi-elliptic surface.
Then  
\[ t_{2}(X) = 0. \]
\end{thm}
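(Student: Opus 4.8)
The plan is to deduce the vanishing of $t_{2}(X)$ from the uniruledness of $X$ by first controlling the Chow group of zero-cycles and then invoking a decomposition of the diagonal. The starting observation is that every fibre of $f$ is, geometrically, a cuspidal rational curve, so $X$ is uniruled: the normalisations $\nu_{c}\colon \mathbb{P}^{1}\to F_{c}$ of the fibres assemble into a dominant rational map $\mathbb{P}^{1}\times C \dashrightarrow X$. The consequence I want to extract is that $\mathrm{CH}_{0}(X)_{\mathbb{Q}}$ is supported on a curve. Concretely, for a closed point $x$ on a fibre $F = F_{f(x)}$, I pull back to the normalisation $\nu\colon \mathbb{P}^{1}\to F$; since any two points of $\mathbb{P}^{1}$ are rationally equivalent and $\nu$ is birational onto $F$, I obtain $[x] = \nu_{*}[a] = \nu_{*}[\tilde{c}] = [\,\text{cusp of }F\,]$ in $\mathrm{CH}_{0}(X)$, where $\tilde{c}$ is the preimage of the cusp. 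Hence every zero-cycle is rationally equivalent to one supported on the curve of cusps $Z\subset X$, the pushforward $\mathrm{CH}_{0}(Z)_{\mathbb{Q}}\to \mathrm{CH}_{0}(X)_{\mathbb{Q}}$ is surjective, and therefore the Albanese kernel $T(X)_{\mathbb{Q}} = \mathrm{CH}^{*}(t_{2}(X))$ vanishes.

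Next I would feed $T(X)_{\mathbb{Q}}=0$ into the Bloch--Srinivas decomposition of the diagonal, which is valid over any field with $\mathbb{Q}$-coefficients. Spreading out the surjectivity of $\mathrm{CH}_{0}(Z)_{\mathbb{Q}}\to\mathrm{CH}_{0}(X)_{\mathbb{Q}}$ along the generic fibre of the first projection yields an equality
\[ \Delta_{X} = \Gamma_{1} + \Gamma_{2} \quad\text{in } \mathrm{CH}^{2}(X\times X)_{\mathbb{Q}}, \]
with $\Gamma_{1}$ supported on $X\times Z$ and $\Gamma_{2}$ supported on $W\times X$ for some curve $W\subset X$. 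As correspondences, $\Gamma_{1}$ then factors through the motive of the normalisation $\tilde{Z}$ in the target variable, and $\Gamma_{2}$ through the motive of the normalisation $\tilde{W}$ in the source variable.

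To finish, write $\pi_{2}^{tr}$ for the projector cutting out $t_{2}(X)$ in the refined Chow--K\"unneth decomposition of \cite{KMP}. Using $\Delta_{X}=\mathrm{id}$ and idempotence,
\[ \pi_{2}^{tr} = \pi_{2}^{tr}\circ\Delta_{X}\circ\pi_{2}^{tr} = \pi_{2}^{tr}\circ\Gamma_{1}\circ\pi_{2}^{tr} + \pi_{2}^{tr}\circ\Gamma_{2}\circ\pi_{2}^{tr}. \]
Since the transcendental motive of a surface is orthogonal to the motive of any curve, i.e. $\mathrm{Hom}(h(\tilde{Z}),t_{2}(X))=0=\mathrm{Hom}(t_{2}(X),h(\tilde{W}))$ by the properties of $t_{2}$ recorded in \cite{KMP}, the first term vanishes because $\Gamma_{1}$ lands in $h(\tilde{Z})$ and the second because $\Gamma_{2}$ originates from $h(\tilde{W})$. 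Thus $\pi_{2}^{tr}=0$, and hence $t_{2}(X)=0$.

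I expect the main obstacle to be the first step in characteristic $2$ and $3$: making the family of fibre-normalisations and the resulting support statement rigorous despite the inseparability inherent in a quasi-elliptic fibration, where the generic fibre is a regular but non-smooth form of a cuspidal cubic and the map $\mathbb{P}^{1}\times C \dashrightarrow X$ is only generically finite and purely inseparable. Once $\mathrm{CH}_{0}(X)_{\mathbb{Q}}$ is known to be supported on a curve, the remaining steps are formal and characteristic-free, depending only on the Bloch--Srinivas decomposition and the structure of $t_{2}(X)$ from \cite{KMP}. It is worth emphasising that, because the identity $\pi_{2}^{tr}=0$ is obtained already at the level of rational equivalence, the conclusion $t_{2}(X)=0$ forces $H^{2}_{\textit{\'et}}(X,\mathbb{Q}_{l})_{tr}=0$ as well, even in cases where $p_{g}(X)>0$; this is precisely the positive-characteristic phenomenon that makes the uniruledness of quasi-elliptic surfaces so effective here.
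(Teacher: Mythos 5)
Your proposal is correct in outline, but it reaches the conclusion by a genuinely different mechanism than the paper. The paper's route is entirely motivic: it first proves that any quasi-elliptic surface is purely inseparably uniruled (base-changing along the Frobenius $C^{(1/p)}\to C$, normalizing the now non-normal generic fibre to get a genus~$0$ fibration, and applying Noether--Tsen), then shows that for any dominant rational map $Y\dashrightarrow X$ of surfaces $t_{2}(X)$ is a direct summand of $t_{2}(Y)$ (via elimination of indeterminacy and the identity $\Gamma_{\pi}\circ{}^{t}\Gamma_{\pi}=d\cdot\Delta_{X}$), and finally computes $t_{2}(\mathbb{P}^{1}\times C)=0$ from the K\"unneth decomposition for products of curves. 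You instead work at the level of zero-cycles: $\mathrm{CH}_{0}(X)_{\mathbb{Q}}$ supported on a curve, Bloch--Srinivas decomposition $\Delta_{X}=\Gamma_{1}+\Gamma_{2}$, and then the vanishing of both terms in $\mathrm{Hom}(t_{2}(X),t_{2}(X))$. Note that this last step is exactly the paper's Theorem~\ref{automorophism groups of motives}: cycles supported on $X\times Z$ or $W\times X$ lie in $\mathrm{CH}_{2}(X\times X)_{\equiv}$, so $\mathrm{id}_{t_{2}(X)}=\Phi_{X,X}(\Gamma_{1})+\Phi_{X,X}(\Gamma_{2})=0$; both proofs ultimately rest on the same input from \cite{KMP}. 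What your route buys is that you never need the \emph{global} uniruledness statement: the fibrewise normalizations $\mathbb{P}^{1}\to F_{c}$ over the algebraically closed $k$ suffice for the support statement, so Noether--Tsen and the Frobenius descent can be avoided. What it costs is the spreading-out step of Bloch--Srinivas, which requires the support statement not just over $k$ but over $\overline{k(X)}$ (or a universal domain); this is fine because quasi-ellipticity is preserved under algebraically closed base extension, but it must be said. The paper's argument stays over $k$ throughout and proves the stronger intermediate result (Theorem~\ref{Main 1}) that $t_{2}$ vanishes for \emph{every} uniruled surface.

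Two points in your write-up deserve tightening. First, the assertion that the fibre normalizations ``assemble into'' a dominant rational map $\mathbb{P}^{1}\times C\dashrightarrow X$ is precisely the delicate point you flag: the generic fibre $X_{\eta}$ is already normal, so one cannot normalize it over $k(C)$; the assembly genuinely requires the purely inseparable base change $C^{(1/p)}\to C$ as in the paper's Theorem~\ref{coverling for quasi-elliptic}. Since your Chow-theoretic argument only uses the closed fibres individually, you should simply drop the claim about assembling them. Second, to get $\mathrm{CH}_{0}(X)_{\mathbb{Q}}$ supported on the cusp curve you must also handle the finitely many degenerate fibres (which are connected unions of smooth rational curves of additive Kodaira type, each met by the horizontal cusp curve), and you need the standard fact that the locus of cusps is a curve finite and surjective onto $C$. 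With these points made explicit, your argument is complete.
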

\subsection{Organization}
This paper is organized as follows.
In Section $2$, we recall the definitions and properties of uniruled surfaces, Shioda-supersingular surfaces, and quasi-elliptic surfaces.
In this paper, we focus on the uniruledness of quasi-elliptic surfaces (Theorem \ref{coverling for quasi-elliptic}).
In Section $3$, we prove two lemmas about homomorphisms between transcendental motives
(Lemma \ref{equivalence for t_{2}} and Lemma \ref{finite morphism for t_{2}}). 
In Section $4$, we prove Theorem \ref{Intro theorem}.
More precisely, we prove that the transcendental motive of any uniruled surface is zero (Theorem \ref{Main 1}).
\subsection{Notation}
Throughout this paper, let $k$ be an algebraically closed field of characteristic $p \geq 0$ and 
let $\mathcal{V}(k)$ be the category of smooth projective varieties over $k$.

\section{The uniruledness of quasi elliptic surfaces}

\subsection{Uniruled surfaces} \ \indent \\
In this subsection, we recall the notions of uniruledness and birationally ruledness.\\
Let $X \in \mathcal{V}(k)$ be a surface.
\begin{enumerate}
\item We say $X$ is \textit{uniruled} if there exist a curve $C$ and a dominant rational map
\[ \phi \colon \mathbb{P}^1 \times C \dashrightarrow X. \]
We say $X$ is \textit{separably uniruled} (resp. \textit{purely inseparable uniruled}) if there exists a such a rational map $\phi$ inducing 
a \textit{separable} (resp. \textit{purely inseparable}) extension of function fields.
\item We say $X$ is \textit{birationally ruled} if there exist a curve $C$ and a \textit{birational map}
\[ \phi \colon \mathbb{P}^1 \times C \overset{\cong}\dashrightarrow X. \]
\end{enumerate}

\indent 
The following fact is well-known.

\begin{prop} \label{separably uniruled and birationally ruled} 
Let $X \in \mathcal{V}(k)$ be a surface.
Then the following are equivalent:
\begin{enumerate}
\item $X$ is birationally ruled$;$
\item $X$ is separably uniruled$;$
\item $X$ has negative Kodaira dimension.
\end{enumerate}
\end{prop}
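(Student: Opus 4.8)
\noindent The plan is to establish the cycle of implications $(1) \Rightarrow (2) \Rightarrow (3) \Rightarrow (1)$.

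The implication $(1) \Rightarrow (2)$ is immediate from the definitions: a birational map $\phi \colon \mathbb{P}^1 \times C \overset{\cong}{\dashrightarrow} X$ is in particular a dominant rational map, and it induces an isomorphism of function fields, which is trivially separable. Hence $X$ is separably uniruled.

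For $(2) \Rightarrow (3)$, the decisive point is that separability forces an injection on spaces of pluricanonical forms. Suppose $\phi \colon \mathbb{P}^1 \times C \dashrightarrow X$ is dominant and separable. After resolving the indeterminacy of $\phi$ by a sequence of blow-ups, I may assume $\phi$ is a morphism $Y \to X$ from a smooth projective surface $Y$ birational to $\mathbb{P}^1 \times C$. Since $\phi$ is dominant between surfaces it is generically finite, and separability means the finite extension $k(Y)/k(X)$ is separable, so the natural pullback map on canonical sheaves $\phi^{\ast}\omega_X \to \omega_Y$ is an isomorphism at the generic point. Being a nonzero map of line bundles on the integral variety $Y$, it is therefore injective; taking $m$-th tensor powers and global sections yields an injection $H^0(X, \omega_X^{\otimes m}) \hookrightarrow H^0(Y, \omega_Y^{\otimes m})$ for every $m \geq 1$. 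As the Kodaira dimension is a birational invariant and $\kappa(\mathbb{P}^1 \times C) = -\infty$, the right-hand side vanishes for all $m \geq 1$, whence $\kappa(X) = -\infty$.

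The implication $(3) \Rightarrow (1)$ is the main obstacle, and it rests on the classification of surfaces. First I would pass to a minimal model $X_{\mathrm{min}}$ of $X$; since both the Kodaira dimension and birational ruledness are birational invariants, it suffices to treat $X_{\mathrm{min}}$. By the Enriques classification in characteristic zero and the Bombieri--Mumford classification in positive characteristic, a minimal smooth projective surface with $\kappa = -\infty$ is either $\mathbb{P}^2$ or a $\mathbb{P}^1$-bundle over a smooth curve $C$. In the first case $\mathbb{P}^2$ is birational to $\mathbb{P}^1 \times \mathbb{P}^1$. In the second case the generic fibre is a copy of $\mathbb{P}^1$ over $k(C)$ carrying a rational point, so the bundle is birationally trivial over $C$ and thus birational to $\mathbb{P}^1 \times C$. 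In either case $X_{\mathrm{min}}$, and therefore $X$, is birationally ruled. The hard part is precisely the appeal to the positive-characteristic classification; the rest of the argument is formal.
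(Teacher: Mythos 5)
Your proposal is correct and follows essentially the same route as the paper: the cycle $(1)\Rightarrow(2)\Rightarrow(3)\Rightarrow(1)$, with $(2)\Rightarrow(3)$ proved by comparing plurigenera via the separable dominant map (you supply the pullback-of-pluricanonical-forms argument that the paper only asserts as the inequality $P_n(\mathbb{P}^1\times C)\geq P_n(X)$), and $(3)\Rightarrow(1)$ resting on the Enriques--Bombieri--Mumford classification, which the paper simply cites from B\u{a}descu. The extra detail you provide is sound; there is no gap.
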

\begin{proof}
(i) $\Rightarrow$ (ii)$:$ This is clear.
(ii) $\Rightarrow$ (iii)$:$
If $X$ is separably uniruled, then there exist a curve $C$ and a dominant rational map 
$\phi : \mathbb{P}^{1} \times C \dashrightarrow X$ such that
the extension of function fields $k(\mathbb{P}^{1} \times C)/k(X)$ is separable.
Then $P_{n}(\mathbb{P}^{1} \times C) = P_{n}(\mathbb{P}^{1}) \cdot P_{n}(C) = 0 \cdot P_{n}(C) = 0$
for every $n \geq 1$.
Since $k(\mathbb{P}^{1} \times C)/k(X)$ is separable, $P_{n}(\mathbb{P}^{1} \times C) \geq P_{n}(X)$.
Thus $P_{n}(X) = 0$ for every $n \geq 1$.
Namely, $X$ has negative Kodaira dimension.\\
\indent (iii) $\Rightarrow$ (i)$:$ For example, see \cite[Theorem 13.2, p.195]{Badescu}
\end{proof}

To derive the uniruledness of quasi-elliptic surfaces, we need the following$:$
\begin{thm} (Noether-Tsen) \label{Noether-Tsen}
Let $\phi : Y \dashrightarrow B$ be a dominant rational map from a surface $Y$ to a curve $B$ satisfying the following conditions $:$
\begin{enumerate}
\item $k(B)$ is algebraically closed in $k(Y);$
\item The generic fiber of $\phi$ has arithmetic genus $0$.
\end{enumerate}
\indent Then $Y$ is birationally-isomorphic to $\mathbb{P}^1 \times B$.

\end{thm}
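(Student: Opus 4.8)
The plan is to analyze $\phi$ through its generic fiber, regarded as a curve over the one-dimensional function field $K := k(B)$, and to identify that curve with $\mathbb{P}^1_K$ via Tsen's theorem. First I would replace $Y$ by a smooth projective model on which $\phi$ is a genuine morphism, and let $Y_\eta$ be the generic fiber; this is a proper regular integral curve over $K$ with function field $k(Y_\eta) = k(Y)$. Condition (i) says precisely that $K$ is algebraically closed in $k(Y)$, which translates into $H^0(Y_\eta, \mathcal{O}_{Y_\eta}) = K$ and makes $Y_\eta$ geometrically connected; condition (ii) says $p_a(Y_\eta) = \dim_K H^1(Y_\eta, \mathcal{O}_{Y_\eta}) = 0$. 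Thus $Y_\eta$ is a geometrically connected regular curve of arithmetic genus $0$ over $K$.

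Next I would show that such a $Y_\eta$ is a conic, i.e. a form of $\mathbb{P}^1_K$. Indeed, the dualizing sheaf $\omega_{Y_\eta}$ has degree $2p_a - 2 = -2$, so $\omega_{Y_\eta}^{-1}$ is a degree-$2$ line bundle with $\dim_K H^0(Y_\eta, \omega_{Y_\eta}^{-1}) = 3$ by Riemann--Roch (the higher cohomology vanishing being clear once $Y_\eta$ is known to be geometrically integral, see below), and the associated morphism embeds $Y_\eta$ as a plane conic in $\mathbb{P}^2_K$. The key arithmetic input is Tsen's theorem: since $k$ is algebraically closed and $B$ is a curve, $K = k(B)$ has transcendence degree $1$ over $k$ and is therefore a $C_1$-field. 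A conic is a quadratic form in three variables, and $3 > 2$, so the $C_1$-property forces a nontrivial zero, i.e. a $K$-rational point $P \in Y_\eta(K)$. A regular curve of arithmetic genus $0$ carrying a $K$-rational smooth point is isomorphic to $\mathbb{P}^1_K$: the complete linear system $|P|$, of dimension $1$, realizes a degree-$1$ morphism $Y_\eta \to \mathbb{P}^1_K$, necessarily an isomorphism.

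Finally, from $Y_\eta \cong \mathbb{P}^1_K$ I would read off $k(Y) = k(Y_\eta) = K(t) = k(B)(t) = k(\mathbb{P}^1 \times B)$, and since two smooth projective varieties with isomorphic function fields are birational, conclude that $Y$ is birationally isomorphic to $\mathbb{P}^1 \times B$. The main obstacle, and the step demanding the most care in characteristic $2$ and $3$, is the passage from ``regular of arithmetic genus $0$'' to ``smooth conic''. Over the imperfect field $K = k(B)$ a regular curve need not be smooth, and there exist regular, geometrically non-reduced genus-$0$ curves (inseparable ``double-line'' conics) on which the $K$-point produced by Tsen fails to be smooth, so that the curve is a purely inseparable form rather than $\mathbb{P}^1_K$. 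I therefore expect the crux to be using hypothesis (i) to guarantee that $Y_\eta$ is geometrically integral, so that the higher cohomology vanishing holds, the $K$-point supplied by Tsen is a smooth point, and the linear-system argument goes through.
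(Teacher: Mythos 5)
The paper offers no proof of this statement at all --- it simply cites B\u{a}descu, Theorem 11.3 --- so there is no internal argument to compare against; what you have written is, in substance, the standard proof from the cited literature (anticanonical embedding of the generic fibre as a conic, Tsen's theorem that $K=k(B)$ is $C_1$, a rational point, then $|P|$ gives $Y_\eta\cong\mathbb{P}^1_K$ and hence $k(Y)=K(t)$). The outline is sound, and you have correctly isolated the only delicate point in characteristic $2$ and $3$: ruling out the regular but geometrically non-reduced ``double-line'' conics. Since you leave that step as an expectation rather than an argument, let me note that it does close, but only because $\mathrm{trdeg}_k K=1$: as $k$ is perfect, $[K^{1/p}:K]=p$, say $K^{1/p}=K(s^{1/p})$, and $k(Y_\eta)\otimes_K K^{1/p}\cong k(Y_\eta)[T]/(T^p-s)$ is non-reduced precisely when $s\in k(Y_\eta)^p$, i.e.\ when $K^{1/p}\subseteq k(Y_\eta)$; this is forbidden by hypothesis (i), since $s^{1/p}$ is algebraic over $K$. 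Hence $k(Y_\eta)/K$ is separable, and combined with (i) (which already gives that $K$ is separably, indeed algebraically, closed in $k(Y_\eta)$) the generic fibre is geometrically integral; its anticanonical image is then an integral conic over $\bar{K}$, hence smooth, every point is a smooth point, and the rest of your argument goes through verbatim. It is worth recording that this implication genuinely uses $p$-degree one: over a field of transcendence degree $\geq 2$ there do exist regular double-line conics whose function field contains no nontrivial algebraic extension of the base, so condition (i) alone would not suffice there. With that one step filled in, your proof is correct.
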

\begin{proof}
For example, see \cite[Theorem\ 11.3,\ p.166]{Badescu}.
\end{proof}

\subsection{Shioda-supersingular surfaces} \ \indent \\
In this subsection, we recall the notions of Lefschetz numbers and Shioda-supersingularity.\\
Let $X \in \mathcal{V}(k)$ be a surface.
Let $\mathrm{Br}(X) : = H^{2}_{\textit{\'et}}(X, \mathbb{G}_{m})$
denote the cohomological Brauer group of $X$.
For a prime number $l \neq p$, we consider the $l$-adic Tate module
\[ T_{l}(\mathrm{Br}(X)) : = \lim_{\longleftarrow n} \mathrm{Ker}([l^{n}] : \mathrm{Br}(X) \rightarrow \mathrm{Br}(X)). \]
We call $\lambda(X) : = \mathrm{rank}_{\mathbb{Z}_{l}}(T_{l}(\mathrm{Br}(X)))$
the \textit{Lefschetz number} of $X$.
It is a birational invariant.
The Kummer sequence $0 \rightarrow \mu_{l^n} \rightarrow \mathbb{G}_{m} \overset{\times l^{n}}\rightarrow \mathbb{G}_{m} \rightarrow 0$
gives an exact sequence
\[ 0 \rightarrow \mathrm{NS}(X) \otimes \mathbb{Z}_{l} \rightarrow H^{2}_{\textit{\'et}}(X, \mathbb{Z}_{l}(1))
\rightarrow  T_{l}(\mathrm{Br}(X))  \rightarrow 0. \]
Thus, we have 
\[ \lambda(X) = b_{2}(X) - \rho(X), \]
where $b_{2}(X)$ and $\rho(X)$ denote the second Betti number and the Picard number of $X$, respectively.
Since $b_{2}$ is the independent of $l$, so also is $\lambda$. 
\begin{definition}
A surface $X$ is \textit{Shioda-supersingular} if $\lambda(X) = 0$ i.e., $b_{2}(X) = \rho(X)$.
\end{definition}
In particular, Conjecture \ref{Bloch conj} becomes the following statement$:$
\begin{conj} \label{Bloch conj 2} (= Conjecture \ref{Bloch conj}).
If $X$ is Shioda-supersingular surface, then 
\[ t_{2}(X) = 0.\]
\end{conj}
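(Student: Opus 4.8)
The plan is to reduce the statement to the finite-dimensionality of Chow motives in the sense of Kimura and O'Sullivan. First recall that Shioda-supersingularity, $\lambda(X) = b_{2}(X) - \rho(X) = 0$, is equivalent to the vanishing of the transcendental lattice, so that $H^{*}_{\textit{\'et}}(t_{2}(X)) = H^{2}_{\textit{\'et}}(X, \mathbb{Q}_{l})_{tr} = 0$; in other words $t_{2}(X)$ is homologically trivial, and its identity $\mathrm{id}_{t_{2}(X)}$ is a homologically trivial idempotent. The key input is the nilpotence theorem: on a finite-dimensional motive every homologically trivial endomorphism is nilpotent. Applying this to $\mathrm{id}_{t_{2}(X)}$, a nilpotent idempotent must vanish, whence $t_{2}(X) = 0$. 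Thus the entire statement follows once we know that $h(X)$ is finite-dimensional, since finite-dimensionality is inherited by the direct summand $t_{2}(X)$.

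To produce finite-dimensionality I would exhibit $h(X)$ inside the thick tensor subcategory $\mathcal{F}$ generated by the motives of smooth curves; every object of $\mathcal{F}$ is finite-dimensional. The geometric mechanism is domination by a product of curves: from a dominant rational map $\mathbb{P}^{1} \times C \dashrightarrow X$, resolving indeterminacy yields a dominant generically finite morphism $\widetilde{Y} \rightarrow X$ from a blow-up $\widetilde{Y}$ of $\mathbb{P}^{1} \times C$, which realizes $h(X)$ as a direct summand of $h(\widetilde{Y})$; since $h(\widetilde{Y})$ is assembled from $h(\mathbb{P}^{1}) \otimes h(C)$ and Tate motives, it—and hence $h(X)$—lies in $\mathcal{F}$. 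This is exactly the situation governed by the uniruledness results of the paper: for uniruled $X$ one in fact obtains $t_{2}(X) = 0$ directly from Theorem \ref{Main 1}, and when $\kappa(X) < 2$ the surface is birationally ruled by Proposition \ref{separably uniruled and birationally ruled}, so $h(X) \in \mathcal{F}$ at once. The content of the conjecture is therefore concentrated in the case $\kappa(X) = 2$.

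The main obstacle is to establish $h(X) \in \mathcal{F}$ for a Shioda-supersingular surface of general type. Here the hypothesis $b_{2}(X) = \rho(X)$ is strong: the whole of $H^{2}_{\textit{\'et}}(X, \mathbb{Q}_{l}(1))$ is spanned by divisor classes, so the Tate conjecture holds for $X$ and the surface carries a maximal supply of algebraic cycles. The strategy is to convert this abundance into an explicit correspondence placing $h(X)$ in $\mathcal{F}$—most plausibly by proving that such $X$ is unirational, or at least dominated by a product of curves, in the spirit of the unirationality conjectures of Artin and Shioda. Note that Mumford's differential-form obstruction, which in characteristic $0$ would force $p_{g}(X) = 0$, is genuinely evaded in characteristic $p$: supersingular K3 surfaces satisfy $b_{2} = \rho$ while $p_{g} = 1$, so $p_{g} > 0$ is no barrier, and the cycle-to-form map degenerates through the Cartier operator. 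Realizing this domination—equivalently, the finite-dimensionality of $h(X)$—for every supersingular surface of general type is the step I expect to be the hard core of the proof.
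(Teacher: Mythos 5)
You have not proved the statement, and in fairness the paper does not either: Conjecture \ref{Bloch conj 2} is stated as an open conjecture (the characteristic-$p$ form of Bloch's conjecture), and the paper establishes only the special case of uniruled --- in particular quasi-elliptic --- surfaces, via Theorem \ref{coverling for quasi-elliptic} and Theorem \ref{Main 1}. The parts of your reduction that you actually carry out are correct and standard: Shioda-supersingularity is equivalent to $H^{2}_{\textit{\'et}}(X, \mathbb{Q}_{l})_{tr} = 0$, hence to the homological triviality of $\mathrm{id}_{t_{2}(X)}$; Kimura's nilpotence theorem (which does apply with the $l$-adic realization in characteristic $p$) then kills this idempotent once $h(X)$, and hence its direct summand $t_{2}(X)$, is finite-dimensional; and a dominant generically finite map from a blow-up of $\mathbb{P}^{1} \times C$ does realize $h(X)$ as a summand of a finite-dimensional motive. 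But the load-bearing step --- finite-dimensionality of $h(X)$, equivalently in your scheme domination by a product of curves, for an \emph{arbitrary} Shioda-supersingular surface --- is itself wide open: it is essentially the Artin--Shioda unirationality circle of problems, and you concede as much (``the step I expect to be the hard core of the proof''). So the proposal trades one open conjecture for another; as a proof of the statement it has an irreparable gap exactly where the difficulty of the conjecture lives.

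Two further corrections. First, you misquote Proposition \ref{separably uniruled and birationally ruled}: it equates birational ruledness with $\kappa(X) < 0$, not $\kappa(X) < 2$; surfaces with $\kappa = 0, 1$ are in general not ruled --- indeed the paper's quasi-elliptic surfaces have $\kappa \in \{0, 1\}$ and are only \emph{purely inseparably} uniruled. Consequently your claim that the content of the conjecture is concentrated in $\kappa(X) = 2$ fails: Shioda-supersingular surfaces of Kodaira dimension $0$ or $1$ that are not known to be uniruled (for instance supersingular K3 surfaces in characteristics where unirationality is unsettled, or supersingular properly elliptic surfaces that are not quasi-elliptic) also escape your argument. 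Second, on the positive side, restricted to the uniruled case your route is a genuine alternative to the paper's: the paper avoids finite-dimensionality altogether, proving $t_{2}(X) = 0$ for uniruled $X$ by correspondence calculus --- Lemma \ref{dominant morphism for t_{2}} exhibits $t_{2}(X)$ as a summand of $t_{2}(\mathbb{P}^{1} \times C)$, which vanishes by Lemma \ref{equivalence for t_{2}} and the decomposition (\ref{product of curves}) --- whereas your nilpotence argument invokes Kimura finite-dimensionality, which for blow-ups of products of curves is known, so in that regime both arguments are complete and yours is shorter, at the cost of heavier machinery.
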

In this paper, we prove Conjecture \ref{Bloch conj 2} for quasi-elliptic surfaces (Theorem \ref{Intro theorem}).
Now, we recall the following property of Lefschetz numbers$:$
\begin{lem} (\cite[Lemma,\ p.234]{Shioda}). \label{innequality of Lefscetz number}
Let $\phi : Y \dashrightarrow X$ be a dominant rational map of surfaces over $k$.
Then 
\[\lambda(Y) \geq \lambda(X). \]
\end{lem}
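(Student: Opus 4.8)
The plan is to reduce the assertion to the case of a genuine \emph{morphism} and then to read off $\lambda$ as the dimension of the transcendental part of $H^{2}_{\textit{\'et}}$, exploiting the transfer (Gysin pushforward) attached to a generically finite morphism. First I would eliminate the indeterminacy of $\phi$: since $Y$ is a smooth projective surface, there is a birational morphism $\pi \colon Y' \to Y$ (a composition of blow-ups at points) from a smooth projective surface $Y'$ such that $g := \phi \circ \pi \colon Y' \to X$ is a \emph{morphism}. As $g$ is proper and dominant between surfaces it is surjective, and because $\dim Y' = \dim X = 2$ its generic fibre is $0$-dimensional, so $g$ is generically finite, say of degree $d \geq 1$. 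Since $\lambda$ is a birational invariant we have $\lambda(Y') = \lambda(Y)$, so it suffices to prove $\lambda(Y') \geq \lambda(X)$ for the generically finite dominant morphism $g$.

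Next I would use the interpretation $\lambda(X) = b_{2}(X) - \rho(X) = \dim_{\mathbb{Q}_{l}} H^{2}_{\textit{\'et}}(X, \mathbb{Q}_{l}(1))_{tr}$, where the transcendental part $T_{X}$ is the orthogonal complement, under the cup-product pairing $\langle\,,\rangle_{X} \colon H^{2}_{\textit{\'et}}(X,\mathbb{Q}_{l}(1)) \times H^{2}_{\textit{\'et}}(X,\mathbb{Q}_{l}(1)) \to H^{4}_{\textit{\'et}}(X,\mathbb{Q}_{l}(2)) \xrightarrow{\ \mathrm{tr}_{X}\ } \mathbb{Q}_{l}$, of the subspace $N_{X}$ spanned by the cycle classes of $\mathrm{NS}(X) \otimes \mathbb{Q}_{l}$. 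For the morphism $g$ there are a pullback $g^{*}$ and a Gysin pushforward $g_{*}$ on $H^{2}_{\textit{\'et}}(-,\mathbb{Q}_{l}(1))$ with $g_{*} \circ g^{*} = d \cdot \mathrm{id}$; as $d \neq 0$, this makes $g^{*}$ injective. The key point is that $g^{*}$ sends transcendental classes to transcendental classes. Indeed, for $t \in T_{X}$ and any algebraic class $n' \in N_{Y'}$, the projection formula together with the compatibility $\mathrm{tr}_{X} \circ g_{*} = \mathrm{tr}_{Y'}$ gives $\langle g^{*}t, n' \rangle_{Y'} = \langle t, g_{*}n' \rangle_{X}$. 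Since $g_{*}$ carries algebraic cycle classes to algebraic cycle classes (the pushforward of a divisor on $Y'$ is a divisor on $X$), we have $g_{*}n' \in N_{X}$, whence $\langle t, g_{*}n' \rangle_{X} = 0$ and therefore $g^{*}t \in T_{Y'}$. Thus $g^{*}$ restricts to an injection $T_{X} \hookrightarrow T_{Y'}$, giving $\dim T_{Y'} \geq \dim T_{X}$, i.e. $\lambda(Y') \geq \lambda(X)$, and hence $\lambda(Y) = \lambda(Y') \geq \lambda(X)$.

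I expect the main obstacle to be the clean bookkeeping of the transfer argument in \'etale cohomology: verifying that $g_{*} \circ g^{*} = d \cdot \mathrm{id}$ and the adjunction $\langle g^{*}a, b \rangle_{Y'} = \langle a, g_{*}b \rangle_{X}$ hold with the correct Tate twists, and confirming that $g_{*}$ is compatible with algebraic cycle classes so that $g_{*}N_{Y'} \subseteq N_{X}$. Once these standard compatibilities are in place the inequality is immediate; alternatively one could argue directly with the Tate module $T_{l}(\mathrm{Br}(-))$, but the transcendental-cohomology formulation above makes the orthogonality argument most transparent.
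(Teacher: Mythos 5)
The paper does not prove this lemma --- it is quoted directly from Shioda's paper (\cite[Lemma, p.234]{Shioda}) without proof. Your argument is correct and is essentially the standard (indeed, Shioda's original) one: resolve the indeterminacy, use birational invariance of $\lambda=b_{2}-\rho$, and show via the projection formula and $g_{*}g^{*}=d\cdot\mathrm{id}$ that $g^{*}$ embeds the orthogonal complement of $\mathrm{NS}(X)_{\mathbb{Q}_{l}}$ into that of $\mathrm{NS}(Y')_{\mathbb{Q}_{l}}$; the only inputs you leave implicit (injectivity of the cycle class map on $\mathrm{NS}\otimes\mathbb{Q}_{l}$, so that $\dim N_{X}=\rho(X)$, and Poincar\'e duality, so that $\dim N_{X}^{\perp}=b_{2}-\rho$) are standard and, in the first case, already supplied by the Kummer-sequence exact sequence displayed earlier in the paper.
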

For the reader's convenience, we include a proof of the following fact due to Shioda$:$
\begin{cor} (\cite[Corollary 2,\ p.235]{Shioda}).
\label{uniruled surface is Shioda-supersingular}
Any uniruled surface is Shioda-supersingular.
\end{cor}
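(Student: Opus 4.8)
The plan is to exploit the inequality of Lefschetz numbers (Lemma \ref{innequality of Lefscetz number}) together with a direct computation on the product surface $\mathbb{P}^{1} \times C$. Since $X$ is uniruled, by definition there exist a curve $C$ and a dominant rational map $\phi \colon \mathbb{P}^{1} \times C \dashrightarrow X$. Replacing $C$ by its smooth projective model and $\mathbb{P}^{1} \times C$ by a smooth projective birational model---which changes neither the existence of the dominant rational map nor the Lefschetz number, as $\lambda$ is a birational invariant---I may assume that $C$ is a smooth projective curve. Applying Lemma \ref{innequality of Lefscetz number} to $\phi$ then gives
\[ \lambda(\mathbb{P}^{1} \times C) \geq \lambda(X) \geq 0, \]
where the right-hand inequality holds because $\lambda$ is the $\mathbb{Z}_{l}$-rank of a module. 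Thus it suffices to show $\lambda(\mathbb{P}^{1} \times C) = 0$.

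To compute $\lambda(\mathbb{P}^{1} \times C) = b_{2}(\mathbb{P}^{1} \times C) - \rho(\mathbb{P}^{1} \times C)$, I would evaluate the two invariants separately. By the K\"unneth formula together with the vanishing $b_{1}(\mathbb{P}^{1}) = 0$, the only contributions to $H^{2}_{\textit{\'et}}$ come from $H^{0}(\mathbb{P}^{1}) \otimes H^{2}(C)$ and $H^{2}(\mathbb{P}^{1}) \otimes H^{0}(C)$, so $b_{2}(\mathbb{P}^{1} \times C) = 2$. On the other hand, $\mathrm{Pic}(\mathbb{P}^{1} \times C) \cong \mathrm{Pic}(\mathbb{P}^{1}) \times \mathrm{Pic}(C)$, and passing to the N\'eron-Severi group removes the divisible part $\mathrm{Pic}^{0}(C)$, leaving $\mathrm{NS}(\mathbb{P}^{1} \times C) \cong \mathbb{Z}^{2}$, generated by the classes of the two rulings $\{pt\} \times C$ and $\mathbb{P}^{1} \times \{pt\}$; hence $\rho(\mathbb{P}^{1} \times C) = 2$. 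Therefore $\lambda(\mathbb{P}^{1} \times C) = 2 - 2 = 0$, and combined with the displayed inequalities this forces $\lambda(X) = 0$, i.e. $X$ is Shioda-supersingular.

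The main point to handle carefully is that a uniruled surface need not be birationally ruled: in positive characteristic there exist purely inseparably uniruled surfaces (indeed quasi-elliptic surfaces are the motivating example), so the map $\phi$ cannot be assumed birational. This is exactly why Lemma \ref{innequality of Lefscetz number}, which applies to arbitrary dominant rational maps rather than only to birational ones, is the correct tool; the possible inseparability of $\phi$ is immaterial to the inequality $\lambda(\mathbb{P}^{1} \times C) \geq \lambda(X)$. The remaining step, the computation $b_{2} = \rho = 2$ for the product surface, is the concrete heart of the argument, but it is entirely standard once $C$ has been taken smooth and projective.
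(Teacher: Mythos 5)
Your proof is correct and follows essentially the same route as the paper: apply the Lefschetz-number inequality for dominant rational maps to $\phi\colon \mathbb{P}^{1}\times C \dashrightarrow X$ and check $b_{2}(\mathbb{P}^{1}\times C)=\rho(\mathbb{P}^{1}\times C)=2$ by K\"unneth and the product decomposition of the Picard/N\'eron--Severi group. The only cosmetic difference is that the paper computes $\mathrm{NS}$ directly via $\mathrm{NS}(\mathbb{P}^{1})\oplus\mathrm{NS}(C)\oplus\mathrm{Hom}(\mathrm{Jac}(\mathbb{P}^{1}),\mathrm{Jac}(C))$ rather than passing through $\mathrm{Pic}$, which amounts to the same thing.
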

\begin{proof}
Let $X$ be a uniruled surface.
By definition, there is a dominant rational map $\phi : \mathbb{P}^{1} \times C \dashrightarrow X$ 
for some curve $C$.
Now, one has
\[ b_{2}(\mathbb{P}^{1} \times C) = \rho(\mathbb{P}^{1} \times C) = 2. \]
\indent (Indeed, we have $H^{2}_{\textit{\'et}}(\mathbb{P}^{1} \times C, \mathbb{Q}_{l}) \cong \oplus_{i+j = 2}H^{i}_{\textit{\'et}}(\mathbb{P}^{1}, \mathbb{Q}_{l}) \otimes H^{j}_{\textit{\'et}}(C, \mathbb{Q}_{l})$
by the K\"unneth decomposition.
Then, we have $H^{2}_{\textit{\'et}}(\mathbb{P}^{1}, \mathbb{Q}_{l}) 
= H^{2}_{\textit{\'et}}(C, \mathbb{Q}_{l}) = \mathbb{Q}_{l}$ 
by Poincare duality. 
Since both $\mathbb{P}^{1}$ and $C$ are irreducible,
we have $H^{0}_{\textit{\'et}}(\mathbb{P}^{1}, \mathbb{Q}_{l}) = H^{0}_{\textit{\'et}}(C, \mathbb{Q}_{l}) = \mathbb{Q}_{l}$.
Thus, we get $b_{2}(\mathbb{P}^{1} \times C) = 2$ by $H^{1}_{\textit{\'et}}(\mathbb{P}^{1}, \mathbb{Q}_{l}) = 0$.
On the other hand, we have $\mathrm{NS}(\mathbb{P}^{1} \times C) = \mathrm{NS}(\mathbb{P}^{1}) \oplus \mathrm{NS}(C) 
\oplus \mathrm{Hom}(\mathrm{Jac}(\mathbb{P}^{1}), \mathrm{Jac}(C))$.
Since both $\mathbb{P}^{1}$ and $C$ has dimension $1$, we have 
$\mathrm{NS}(\mathbb{P}^{1}) = \mathrm{NS}(C) = \mathbb{Q}_{l}$.
Thus, we get $\rho(\mathbb{P}^{1} \times C) = 2$ by $\mathrm{Jac}(\mathbb{P}^{1}) = 0$.
Therefore, we get $b_{2}(\mathbb{P}^{1} \times C) = \rho(\mathbb{P}^{1} \times C) = 2$.)\\
\indent Namely, $\lambda(\mathbb{P}^{1} \times C) = 0$.
Since $\phi$ is dominant, $\lambda(X) = 0$ by Lemma \ref{innequality of Lefscetz number}.
Thus, $X$ is Shioda-supersingular.
\end{proof}

\subsection{Quasi-elliptic surfaces} \ \indent \\
In this subsection, we recall the uniruledness of quasi-elliptic surfaces
(Theorem \ref{coverling for quasi-elliptic}).\\
Let us begin with the following definition$:$
\begin{definition}
A \textit{genus $1$ fibration} from a surface is a proper morphism
\[ f : X \rightarrow C \]
from a smooth, relatively-minimal surface $X$ onto a normal curve $C$
such that the generic fiber $X_{\eta}$ is a normal, geometrically-integral, curve with arithmetic genus $1$.\\
\indent The fibration $f$ is called \textit{quasi-elliptic} (resp. \textit{elliptic}) 
if the geometric generic fiber $X_{\bar{\eta}}$ is \textit{not normal} (resp. \textit{normal}). 
\end{definition}

\begin{rem}
In fact, if $f$ is quasi-elliptic, then $X_{\bar{\eta}}$ is a singular rational curve with one cusp.
Quasi-elliptic surfaces can occur only in characteristic $2$ and $3$ (e.g. \cite{BombieriMumford3}).
\end{rem}

The following result plays a key role in the proof of Theorem \ref{Intro theorem}. 

\begin{thm} 
\label{coverling for quasi-elliptic}
Let $f : X \rightarrow C$ be a quasi-elliptic surface
over an algebraically closed field $k$ of characteristic $p > 0$.
Then, there are a birationally ruled surface $Y$ and a proper map $\pi : Y \rightarrow X$ of degree $p$.
More precisely, any quasi-elliptic surface is (purely inseparable) uniruled.
\end{thm}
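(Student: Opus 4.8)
The plan is to descend to the generic fiber, trivialize the cusp by a purely inseparable extension of degree $p$, and then spread the resulting rational curve back out over the base. Write $K = k(C)$ and let $E = X_{\eta}$ be the generic fiber, a regular, geometrically integral curve over $K$ with arithmetic genus $1$. By the definition of quasi-ellipticity the geometric fiber $E_{\bar{K}}$ is non-normal; since $X$ is smooth, $E$ is regular, and over a perfect field regularity coincides with smoothness, so the loss of normality can only be produced by an inseparable base change. Hence the non-smooth point of $E$ and, after base change, the cusp of $E_{\bar{K}}$ are defined over a purely inseparable extension of $K$.

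First I would invoke the structure theory of quasi-elliptic fibers (Bombieri--Mumford \cite{BombieriMumford3}): there is a purely inseparable extension $L/K$ of degree $p$ such that $E_{L} := E \times_{K} L$ is non-normal, and its normalization $\nu \colon \widetilde{E}_{L} \to E_{L}$ satisfies $\widetilde{E}_{L} \cong \mathbb{P}^{1}_{L}$. Concretely, in the characteristic $3$ model $y^{2} = x^{3} + t$ one adjoins $t^{1/3}$ and reads off the cuspidal cubic $y^{2} = u^{3}$, whose normalization is rational; the characteristic $2$ case is analogous, and in both the degree of the extension is exactly $p$. The essential outputs are that $[L:K] = p$, that $L/K$ is purely inseparable, and that the normalized fiber is a rational curve carrying an $L$-point, hence genuinely $\mathbb{P}^{1}_{L}$ rather than a nontrivial genus-$0$ form.

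Next I would geometrize this extension: $L/K$ corresponds to a finite purely inseparable morphism of normal curves $C' \to C$ with $k(C') = L$. Regarding $\widetilde{E}_{L} \cong \mathbb{P}^{1}_{L}$ as the generic fiber of a smooth, relatively minimal model $Y_{0} \to C'$, the generic fiber has arithmetic genus $0$ and $k(C') = L$ is algebraically closed in $k(Y_{0}) = k(\widetilde{E}_{L})$; by Noether--Tsen (Theorem \ref{Noether-Tsen}), $Y_{0}$ is birational to $\mathbb{P}^{1} \times C'$, i.e.\ birationally ruled. To produce the proper map I would use the composite of generic-fiber morphisms
\[ \widetilde{E}_{L} \xrightarrow{\ \nu\ } E_{L} = E \times_{K} L \xrightarrow{\ \mathrm{pr}\ } E = X_{\eta}, \]
which is purely inseparable of degree $[L:K] = p$ and induces an inclusion $k(X) = k(X_{\eta}) \hookrightarrow k(\widetilde{E}_{L})$. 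Taking $\bar{Y}$ to be the normalization of $X$ in the purely inseparable field extension $k(\widetilde{E}_{L}) \supseteq k(X)$ yields a finite, hence proper, morphism $\bar{Y} \to X$ of degree $p$ whose source has function field $k(\mathbb{P}^{1} \times C')$; resolving singularities produces a smooth birationally ruled surface $Y$ with a proper map $\pi \colon Y \to X$ of degree $p$. Since $\pi$ is purely inseparable and the composite $\mathbb{P}^{1} \times C' \dashrightarrow Y \xrightarrow{\pi} X$ is dominant, $X$ is purely inseparably uniruled.

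The main obstacle is the second step: showing that a single degree-$p$ purely inseparable extension already renders the generic fiber non-normal with normalization exactly $\mathbb{P}^{1}$. This is precisely where the restriction to characteristics $2$ and $3$ enters, and it rests on the detailed classification of quasi-elliptic fibers; one must check not only that the cusp becomes rational over an extension of degree $p$, but that the normalization acquires an $L$-rational point, so that it is $\mathbb{P}^{1}_{L}$ and Noether--Tsen applies. By contrast, the remaining steps---geometrizing the extension as a cover $C' \to C$, invoking Noether--Tsen, and passing to a normalization and resolution to upgrade the rational map into a proper degree-$p$ morphism---are formal.
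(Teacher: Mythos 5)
Your proposal is correct and follows essentially the same route as the paper: base-change the generic fiber along a degree-$p$ purely inseparable extension of $K = k(C)$ (the paper takes $L = k(C^{(1/p)})$ via the Frobenius $C^{(1/p)} \to C$), normalize to obtain an arithmetic-genus-$0$ curve, take a relatively minimal model over the covering curve, and apply Noether--Tsen to conclude birational ruledness and hence purely inseparable uniruledness. The only cosmetic differences are that the paper does not need $\widetilde{E}_{L} \cong \mathbb{P}^{1}_{L}$ (its statement of Noether--Tsen requires only arithmetic genus $0$, so your worry about forms of $\mathbb{P}^1$ is already absorbed there), and it obtains the proper degree-$p$ map from the projection $X \times_{C} C^{(1/p)} \to X$ rather than by normalizing $X$ inside the function field extension.
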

\begin{proof}
The ideas of the proof are based on \cite[Section 1]{BombieriMumford3} or \cite[Theorem 9.4,\ p.266]{LiedtkeSurvey}.
Let $F : C^{(1/p)} \rightarrow C$ be the Frobenius morphism of degree $p$.
Let $K$ and $L$ be the functions fields of $C$ and $C^{(1/p)}$, respectively.
Let $X_{\eta}$ be the generic fiber of $f$.
Since $f$ is quasi-elliptic, $X_{\eta} \otimes_{K} L$ is not normal.
Let $Y_{\xi}$ be the normalization of $X_{\eta} \otimes_{K} L$.
Then $Y_{\xi}$ has arithmetic genus $0$.
Let $\phi : Y \rightarrow C^{(1/p)}$ be a regular, relatively minimal model of $Y_{\xi}$.
Then, there are the following commutative diagrams

\[
\begin{CD}
Y @>>> X \times_{C} C^{(1/p)} @>{\pi}>> X \\
@V{\phi}VV @VVV  @VV{f}V \\
C^{(1/p)} @= C^{(1/p)} @>>{F}> C
\end{CD}
\]

Now, the generic fiber of $\phi$ is $Y_{\xi}$, and $L = k(C^{(1/p)})$ is algebraically closed in $k(Y)$ 
(since $\phi_{*}\mathcal{O}_{Y} \cong \mathcal{O}_{C^{(1/p)}}$) .
By Noether-Tsen's theorem (Theorem \ref{Noether-Tsen}),
$Y$ is birationally isomorphic to $\mathbb{P}^{1} \times C^{(1/p)}$.
Hence, we get a dominant rational map
\[ \mathbb{P}^{1} \times C^{(1/p)} \dashrightarrow  X. \]
Since $L/K$ is purely inseparable, $X$ is purely inseparable uniruled.
\end{proof} 

\begin{rem}
Any genus $1$ fibration has Kodaira dimension $- \infty$, $0$, or $1$ 
(e.g. \cite{Badescu}).
By Proposition \ref{separably uniruled and birationally ruled} and Theorem \ref{coverling for quasi-elliptic},
any quasi-elliptic surface has Kodaira dimension $0$ or $1$.
\end{rem}

\begin{rem} By Theorem \ref{coverling for quasi-elliptic} and Corollary \ref{uniruled surface is Shioda-supersingular}, we have 
\begin{center}
quasi-elliptic \ \ \ $\Longrightarrow$ \ \ \ uniruled \ \ \ $\Longrightarrow$ \ \ \ Shioda-supersingular
\end{center}
\end{rem}

\section{Transcendental motives}
\subsection{Chow motives} \ \indent \\
In this subsection, we recall the notions of Chow motives and transcendental motives.\\
Let $k$ be an algebraically closed field of characteristic $p \geq 0$.
Let $\mathcal{V}(k)$ be the category of smooth projective varieties over $k$.
For every $V \in \mathcal{V}(k)$,
we denote by $\mathrm{CH}^{i}(V)$ the Chow group of codimensional $i$-cycles with $\mathbb{Q}$-coefficients,  and $\mathrm{CH}(V) = \oplus_{i}\mathrm{CH}^{i}(V)$.
Let $U, V, W \in \mathcal{V}(k)$.
For $\alpha \in \mathrm{CH}(U \times V)$, $\beta \in \mathrm{CH}(V \times W)$, define
$\beta \circ \alpha : = p_{UW*}(p_{UV}^{*}(\alpha) \cdot p_{VW}^{*}(\beta)) \in \mathrm{CH}(U \times W)$
where $p_{UV}$, $p_{VW}$, and $p_{UW}$ are the appropriate projections.
We denote by $\mathcal{M}_{rat}(k)$
the contravariant category of Chow motives with $\mathbb{Q}$-coefficients over $k$, which is $\mathbb{Q}$-linear, pseudoabelian, tensor category.
An object $M$ of $\mathcal{M}_{rat}(k)$ is the triple $M = (V, p, m)$,
where $V \in \mathcal{V}(k)$,
$p \in \mathrm{CH}^{\mathrm{dim}(V)}(V \times V)$ a projector (i.e., $p \circ p = p$),
and $m \in \mathbb{Z}$.
If $V, W \in \mathcal{V}(k)$ are irreducible, then 
\[ \mathrm{Hom}_{\mathcal{M}_{rat}(k)}((V, p, m), (W, q, n)) 
= q \circ \mathrm{CH}^{\mathrm{dim}(V) + n - m}(V \times W) \circ p. \]
\indent For $M = (V, p, m), N = (W, q, n) \in \mathcal{M}_{rat}(k)$,  
we denote by $M \oplus N$ the sum and by the tensor product  $M \otimes N$.
In particular, if $m = n$, then $M \oplus N  = (V \sqcup W, p \oplus q, m)$.
For a non-negative integer $n$, let 
$\mathbb{L}^{\oplus n} : = \mathbb{L} \oplus \cdot \cdot \cdot \oplus \mathbb{L}$ 
and $\mathbb{L}^{\otimes n} = \mathbb{L} \otimes \cdot \cdot \cdot \otimes \mathbb{L}$ ($n$-times).
For a prime number $l \neq p$, we consider the $l$-adic \'etale cohomology theory 
$H_{\textit{\'et}}^{*}$ which induces a functor
$H_{\textit{\'et}}^{*} : \mathcal{M}_{rat}(k) \rightarrow Vect_{\mathbb{Q}_{l}}^{gr}$ such that 
$H_{\textit{\'et}}^{i}((V, p, m)) = p^{*}H_{\textit{\'et}}^{i-2m}(V, \mathbb{Q}_{l})$.\\
\indent Let $V \in \mathcal{V}(k)$ be a variety of dimension $d$.
We denote by $h(V) = (V, \Delta_{V}, 0)$ the Chow motive of $V$.
Here $\Delta_{V}$ is the diagonal of $V$ in $\mathrm{CH}^{d}(V \times V)$.
If $d \leq 2$, then $V$ admits a Chow-K\"unneth decomposition, that is, there is a decomposition
\[ h(V) \cong \oplus_{i=0}^{2d}h_{i}(V) \]
such that $h_{i}(V) = (V, \pi_{i}(V), 0)$, $\pi$ are pairwise orthogonal projectors, and 
 $cl(\pi_{i})$ coincides with $(2d-i, i)$-component of $\Delta_{V}$ 
in the K\"unneth component of $H^{2d}_{\textit{\'et}}(V \times V, \mathbb{Q}_{l})$.
Here $cl : \mathrm{CH}^{d}(V \times V)_{hom} \rightarrow H^{2d}_{\textit{\'et}}(V \times V, \mathbb{Q}_{l})$ is the cycle map. Then $h_{0}(V) \cong 1$ and $h_{2d}(V) \cong \mathbb{L}^{\otimes 2d}$.
In particular, $h(\mathbb{P}) = 1 \oplus \mathbb{L}$.
Moreover, for two curves $C, D \in \mathcal{V}(k)$, by \cite[6.1.5,\ p.69]{Murre and  Nagel and Peters},
\begin{equation} \label{product of curves}
h(C \times D) \cong \oplus_{i=0}^{2}\oplus_{j+k = i}h_{j}(C) \otimes h_{k}(D).
\end{equation}
\indent From now on, let $X \in \mathcal{V}(k)$ be a surface. 
The motive $h_{1}(X)$ (resp. $h_{3}(X)$) is controlled by the Picard (resp. Albanese) variety of $X$. 
Thus, $h_{i}$ is well understood for $i \neq 2$.
Let 
\[ h_{2}(X) = h_{2}^{alg}(X) \oplus t_{2}(X) = (X, \pi_{2}^{alg}(X), 0) \oplus (X, \pi_{2}^{tr}(X), 0) \]
be the decomposition of $h_{2}(X)$ as in \cite{KMP}.
The motive $t_{2}(X)$ is called the transcendental motive of $X$.
It is a birational invariant and  
 $H^{2}_{\textit{\'et}}(h_{2}^{alg}(X)) = \mathrm{NS}(X)_{\mathbb{Q}_{l}}$ and 
$H^{2}_{\textit{\'et}}(t_{2}(X)) = H^{2}_{\textit{\'et}}(X, \mathbb{Q}_{l})_{tr}$.
By construction, $h_{2}^{alg}(X) \cong \mathbb{L}^{\oplus \rho(X)}$,
so $h_{2}^{alg}$ is also well understood.
However, $t_{2}$ is still mysterious.
For example, see Conjecture \ref{Bloch conj} (= Conjecture \ref{Bloch conj 2}).\\
\indent Now, we prove a necessary and sufficient condition for $t_{2} = 0$.
\begin{lem} \label{equivalence for t_{2}}
Let $X \in \mathcal{V}(k)$ be a surface. Then  
\[ h_{2}(X) \cong \mathbb{L}^{\oplus b_{2}(X)} \ \ \ \ \ \text{if and only if} \ \ \ \ \ t_{2}(X) = 0. \]
In particular, if $b_{2}(X) \neq \rho(X)$, then $t_{2}(X) \neq 0$.
\end{lem}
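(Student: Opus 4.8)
The plan is to exploit the canonical splitting $h_{2}(X) = h_{2}^{alg}(X) \oplus t_{2}(X)$ recalled above, together with the two facts $h_{2}^{alg}(X) \cong \mathbb{L}^{\oplus \rho(X)}$ and $\mathrm{CH}^{*}(t_{2}(X)) = T(X)_{\mathbb{Q}}$. The three ingredients I would isolate first are: (a) the realization identity $H^{2}_{\textit{\'et}}(h_{2}(X)) = H^{2}_{\textit{\'et}}(X, \mathbb{Q}_{l})$, so that $\dim_{\mathbb{Q}_{l}} H^{2}_{\textit{\'et}}(h_{2}(X)) = b_{2}(X)$, which is immediate from the Chow--K\"unneth decomposition (the projector $\pi_{2}$ acts on cohomology as the projection onto the degree-$2$ part); (b) the endomorphism computation $\mathrm{End}_{\mathcal{M}_{rat}(k)}(\mathbb{L}) = \mathbb{Q}$, whence $\mathrm{End}(\mathbb{L}^{\oplus n}) \cong M_{n}(\mathbb{Q})$; and (c) the vanishing $\mathrm{Hom}(\mathbb{L}, t_{2}(X)) = 0$.

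For the easy direction ($\Leftarrow$), suppose $t_{2}(X) = 0$. Then $h_{2}(X) = h_{2}^{alg}(X) \cong \mathbb{L}^{\oplus \rho(X)}$. Applying the realization functor and using ingredient (a) together with $\dim_{\mathbb{Q}_{l}} H^{2}_{\textit{\'et}}(\mathbb{L}^{\oplus \rho(X)}) = \rho(X)$, I obtain $\rho(X) = b_{2}(X)$, so $h_{2}(X) \cong \mathbb{L}^{\oplus b_{2}(X)}$. The very same computation proves the final assertion: $t_{2}(X) = 0$ always forces $\rho(X) = b_{2}(X)$, and by contraposition $b_{2}(X) \neq \rho(X)$ implies $t_{2}(X) \neq 0$.

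For the converse ($\Rightarrow$), assume $h_{2}(X) \cong \mathbb{L}^{\oplus b_{2}(X)}$. Since $t_{2}(X)$ is a direct summand of $h_{2}(X)$, it is a direct summand of $\mathbb{L}^{\oplus b_{2}(X)}$, hence cut out by an idempotent $e \in \mathrm{End}(\mathbb{L}^{\oplus b_{2}(X)}) \cong M_{b_{2}(X)}(\mathbb{Q})$. Every idempotent matrix over $\mathbb{Q}$ is conjugate to a diagonal projection of rank $r = \mathrm{tr}(e)$, so its image is isomorphic to $\mathbb{L}^{\oplus r}$; thus $t_{2}(X) \cong \mathbb{L}^{\oplus r}$ for some $0 \leq r \leq b_{2}(X)$. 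To pin down $r$, I would use the identification $\mathrm{Hom}(\mathbb{L}, M) = \mathrm{CH}^{1}(M)$ coming from the $\mathrm{Hom}$ formula recalled in the text: because $\mathrm{CH}^{*}(t_{2}(X)) = T(X)_{\mathbb{Q}}$ is the Albanese kernel and is therefore concentrated in codimension $2$, we have $\mathrm{CH}^{1}(t_{2}(X)) = 0$, i.e. $\mathrm{Hom}(\mathbb{L}, t_{2}(X)) = 0$. On the other hand $\mathrm{Hom}(\mathbb{L}, \mathbb{L}^{\oplus r}) \cong \mathbb{Q}^{\oplus r}$, so $r = 0$ and $t_{2}(X) = 0$.

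I expect the main obstacle to lie in the bookkeeping of ingredients (b) and (c) rather than in any deep input. For (b) I must verify that splitting of idempotents in the pseudoabelian category $\mathcal{M}_{rat}(k)$ really forces every summand of $\mathbb{L}^{\oplus n}$ to be a power of $\mathbb{L}$; this reduces to $\mathrm{End}(\mathbb{L}) = \mathbb{Q}$ and the classification of idempotents in $M_{n}(\mathbb{Q})$ up to conjugacy by their rank. For (c) the delicate point is to match the categorical $\mathrm{Hom}$ with the Chow group in the correct codimension; once $\mathrm{Hom}(\mathbb{L}, t_{2}(X)) = \mathrm{CH}^{1}(t_{2}(X))$ is established, the vanishing is exactly the statement that $t_{2}$ carries no algebraic part in codimension $1$.
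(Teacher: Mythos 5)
Your proof is correct and follows essentially the same route as the paper: both directions rest on the splitting $h_{2}(X) = h_{2}^{alg}(X) \oplus t_{2}(X)$ with $h_{2}^{alg}(X) \cong \mathbb{L}^{\oplus \rho(X)}$, the vanishing $\mathrm{Hom}(\mathbb{L}, t_{2}(X)) = 0$, and a cohomological dimension count giving $b_{2} = \rho$. Your idempotent argument in $M_{b_{2}(X)}(\mathbb{Q})$ simply makes explicit the step the paper asserts as $t_{2}(X) \cong \mathbb{L}^{\oplus (b_{2}-\rho)}$, which is a welcome but not substantively different addition.
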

\begin{proof}
Assume $h_{2}(X) \cong \mathbb{L}^{\oplus b_{2}(X)}$.
Since $h_{2}^{alg}(X) \cong \mathbb{L}^{\oplus \rho(X)}$,
we have $t_{2}(X) \cong \mathbb{L}^{b_{2} - \rho}$.
Since $\mathrm{Hom}(\mathbb{L}, t_{2}(X)) = 0$ (e.g. \cite{KMP}), we have
$\mathrm{Hom}(\mathbb{L}^{b_{2}(X) - \rho(X)}, t_{2}(X)) = 0$, so $t_{2}(X) = 0$.\\
\indent Conversely, assume $t_{2}(X) = 0$.
Then $h_{2}(X) = h_{2}^{alg}(X) \cong \mathbb{L}^{\oplus \rho(X)}$.
Take the cohomology$:$ 
$H^{2}_{\textit{\'et}}(t_{2}(X)) = H^{2}_{\textit{\'et}}(X, \mathbb{Q}_{l})_{tr} \cong \mathbb{Q}_{l}^{b_{2}-\rho}$.
Since $t_{2}(X) = 0$, we have $\mathbb{Q}_{l}^{b_{2}-\rho} = 0$, so $b_{2} = \rho$.
Thus, $h_{2}(X) \cong \mathbb{L}^{\oplus b_{2}(X)}$.
On the contrary, if $b_{2} \neq \rho$, then $t_{2}(X) \neq 0$.
\end{proof}

\subsection{Homomorphisms between transcendental motives} \ \indent \\
In this subsection, we prove some results on homomorphisms between transcendental motives.
Let $k$ be an algebraically closed field. Let $X, Y \in \mathcal{V}(k)$ be surfaces.
\begin{center}
$\mathrm{CH}_{2}(X \times Y)_{\equiv}$ $:$ 
the subgroup of $\mathrm{CH}_{2}(X \times Y)$ generated by the classes supported on subvarieties of the form 
$X \times N$ or $M \times Y$, 
with  $M$ a closed subvariety of $X$ of dimension $< 2$ 
and $N$ a closed subvariety of $Y$ of dimension $< 2$.
\end{center}
We define a homomorphism 
\begin{align*}
\Phi_{X, Y} : \mathrm{CH}_{2}(X \times Y) &\rightarrow 
\mathrm{Hom}_{\mathcal{M}_{rat}(k)}(t_{2}(X), t_{2}(Y)) \\
  \alpha &\mapsto \pi_{2}^{tr}(Y) \circ \alpha \circ \pi_{2}^{tr}(X).
\end{align*}
\begin{thm} \label{automorophism groups of motives}
(\cite[Theorem\ 7.4.3,\ p.165]{KMP}). There is an isomorphism of groups
\[ \mathrm{CH}_{2}(X \times Y)/\mathrm{CH}_{2}(X \times Y)_{\equiv} \cong 
\mathrm{Hom}_{\mathcal{M}_{rat}(k)}(t_{2}(X), t_{2}(Y)). \]
\end{thm}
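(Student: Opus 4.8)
The plan is to show that $\Phi_{X,Y}$ induces the claimed isomorphism, which amounts to proving two things: that $\Phi_{X,Y}$ is surjective, and that its kernel is exactly $\mathrm{CH}_{2}(X \times Y)_{\equiv}$. First I would unwind the definitions. Recall that by construction $h_{2}^{tr}(X) = (X, \pi_{2}^{tr}(X), 0)$, so that by the Hom-formula in $\mathcal{M}_{rat}(k)$ we have
\[ \mathrm{Hom}_{\mathcal{M}_{rat}(k)}(t_{2}(X), t_{2}(Y)) = \pi_{2}^{tr}(Y) \circ \mathrm{CH}_{2}(X \times Y) \circ \pi_{2}^{tr}(X), \]
using $\dim X = \dim Y = 2$ and $m = n = 0$. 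This identification immediately shows that $\Phi_{X,Y}$, which sends $\alpha \mapsto \pi_{2}^{tr}(Y) \circ \alpha \circ \pi_{2}^{tr}(X)$, is surjective: any homomorphism is literally of this form. So surjectivity is essentially formal once the Hom-group is written out.

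Next I would turn to the kernel, which is the heart of the matter. The containment $\mathrm{CH}_{2}(X \times Y)_{\equiv} \subseteq \ker \Phi_{X,Y}$ is the more tractable inclusion and I would do it first. The point is that $\pi_{2}^{tr}(X)$ is orthogonal to the other Chow--K\"unneth projectors $\pi_{i}(X)$ and, crucially, to the algebraic part $\pi_{2}^{alg}(X)$. A cycle supported on $M \times Y$ with $\dim M < 2$ factors through a lower-dimensional piece and is killed upon precomposition with $\pi_{2}^{tr}(X)$, because such a correspondence lands in the image of $\pi_{0}(X) \oplus \pi_{1}(X)$ after transposition; symmetrically a cycle supported on $X \times N$ with $\dim N < 2$ is killed by postcomposition with $\pi_{2}^{tr}(Y)$, since it factors through $\pi_{3}(Y) \oplus \pi_{4}(Y)$. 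Making this precise requires tracking how the refined Chow--K\"unneth projectors of \cite{KMP} interact with correspondences of bounded support; the divisorial correspondences land in $h_{2}^{alg}$, which is orthogonal to $t_{2}$, so those too die.

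The reverse inclusion $\ker \Phi_{X,Y} \subseteq \mathrm{CH}_{2}(X \times Y)_{\equiv}$ is where I expect the main obstacle to lie, and I would lean on the structural results of Kahn--Murre--Pedrini rather than attempting it from scratch. The strategy is to use the explicit description of $\pi_{2}^{tr}$ as $\Delta_{X} - \sum_{i \neq 2}\pi_{i}(X) - \pi_{2}^{alg}(X)$: if $\Phi_{X,Y}(\alpha) = 0$, then $\pi_{2}^{tr}(Y) \circ \alpha \circ \pi_{2}^{tr}(X) = 0$, and expanding both projectors expresses $\alpha$ modulo terms each of which factors through one of the $\pi_{i}$ with $i \neq 2$ or through $\pi_{2}^{alg}$. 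Each such term is represented by a correspondence supported on a product with one lower-dimensional factor (for the $i \neq 2$ pieces, via the point/curve/Albanese support of $h_{0}, h_{1}, h_{3}, h_{4}$) or by a sum of classes of the form (divisor)$\times$(divisor) (for the algebraic piece), and both types lie in $\mathrm{CH}_{2}(X \times Y)_{\equiv}$ by definition. Assembling these identifications carefully, with attention to the decomposition of the diagonal and the precise generators of $\mathrm{CH}_{2}(X \times Y)_{\equiv}$, is the delicate bookkeeping that drives the proof; I would cite \cite[Theorem\ 7.4.3]{KMP} for the completed argument.
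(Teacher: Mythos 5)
The paper does not prove this statement at all: it is imported verbatim from Kahn--Murre--Pedrini with the citation \cite[Theorem 7.4.3]{KMP}, so there is no internal proof to compare against. Your outline is a faithful sketch of the standard argument behind that reference and ends by deferring to the same source, which is consistent with what the paper does. One small correction of emphasis: you have the difficulty of the two inclusions inverted. The inclusion $\ker \Phi_{X,Y} \subseteq \mathrm{CH}_{2}(X \times Y)_{\equiv}$, which you call the heart of the matter, is actually the formal one --- expand $\pi_{2}^{tr} = \Delta - \sum_{i \neq 2}\pi_{i} - \pi_{2}^{alg}$, note that $\pi_{i}$ and $\pi_{2}^{alg}$ lie in $\mathrm{CH}_{2}(\cdot \times \cdot)_{\equiv}$ by construction, and use the stability of $\mathrm{CH}_{\equiv}$ under composition (exactly the content of Lemma \ref{composition of mod} in this paper) to conclude $\alpha \in \mathrm{CH}_{2}(X \times Y)_{\equiv}$ whenever $\pi_{2}^{tr}(Y) \circ \alpha \circ \pi_{2}^{tr}(X) = 0$. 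The genuinely nontrivial input sits in the opposite inclusion $\mathrm{CH}_{2}(X \times Y)_{\equiv} \subseteq \ker \Phi_{X,Y}$: a class supported on $M \times Y$ or $X \times N$ is not obviously annihilated, but factors through the motive of a curve (or point), and one needs the vanishing $\mathrm{Hom}(t_{2}(X), h(C)(j)) = 0$ for curves $C$, which rests on the action of $\pi_{2}^{tr}$ on $\mathrm{CH}^{0}$, $\mathrm{CH}^{1}$ and the Albanese. Your mechanisms for both directions are correct; only the labels of ``tractable'' and ``obstacle'' should be swapped.
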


To prove the functorial relation for $\Phi_{X, Y}$, we need the following lemma$:$
\begin{lem} \label{composition of mod} 
Let $\alpha \in \mathrm{CH}_{2}(X \times Y)$ and 
$\gamma \in \mathrm{CH}_{2}(Y \times X)_{\equiv}$. Then
\[ \text{(i) $\gamma \circ \alpha  \in \mathrm{CH}_{2}(X \times X)_{\equiv}$ \ \ \ \ \ 
and \ \ \ \ \ (ii) $\alpha \circ \gamma \in \mathrm{CH}_{2}(Y \times Y)_{\equiv}$}. \]
\end{lem}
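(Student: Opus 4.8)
The plan is to work directly with the generators of $\mathrm{CH}_{2}(Y \times X)_{\equiv}$ and show that composition on either side preserves the property of being supported on a product where one factor is a proper closed subvariety. Since both statements are symmetric in form, I will carry out (i) in detail; (ii) follows by the identical argument with the roles of $X$ and $Y$ interchanged. By linearity it suffices to treat a single generator $\gamma$ of $\mathrm{CH}_{2}(Y \times X)_{\equiv}$, so I assume $\gamma$ is supported either on $Y \times N$ with $N \subset X$ a closed subvariety of dimension $< 2$, or on $M \times X$ with $M \subset Y$ of dimension $< 2$. The composition is $\gamma \circ \alpha = p_{13*}\bigl(p_{12}^{*}(\alpha)\cdot p_{23}^{*}(\gamma)\bigr)$, where the $p_{ij}$ are the projections from $X \times Y \times X$ onto the indicated factors.

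First I would handle the case $\gamma$ supported on $Y \times N$. Then $p_{23}^{*}(\gamma)$ is supported on $X \times Y \times N$, and hence so is the intersection product $p_{12}^{*}(\alpha)\cdot p_{23}^{*}(\gamma)$. Pushing forward under $p_{13}$, the image is supported on $X \times N$, which is exactly a generator of the desired form since $\dim N < 2$. Concretely, the support condition is preserved because the third-coordinate projection of $p_{13}$ only retains the $N$-component. Second, I would treat the case $\gamma$ supported on $M \times X$ with $\dim M < 2$. Here $p_{23}^{*}(\gamma)$ is supported on $X \times M \times X$, so the product cycle lives over $X \times M \times X$; after applying $p_{13*}$ the resulting cycle has support contained in $p_{13}(X \times M \times X) = X \times X$. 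This does not immediately exhibit a proper subvariety in either factor, so a dimension count is required.

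The dimension count is the crux of the matter and the step I expect to be the main obstacle. For the second case, the key observation is that the middle factor $M$ has dimension $\leq 1$, and the fibers of the projection $p_{23}\colon X\times Y\times X \to Y\times X$ restricted to the support of the product cycle are constrained by $\alpha$. Since $\alpha \in \mathrm{CH}_{2}(X \times Y)$ is a $2$-cycle and $\gamma$ forces the $Y$-coordinate to lie in $M$, the intersection $p_{12}^{*}(\alpha)\cdot p_{23}^{*}(\gamma)$ is supported on a locus whose projection to the first $X$-factor can have dimension at most $1$: the $Y$-coordinate ranges over $\dim M < 2$, and the component of $\alpha$ lying over such a restricted $Y$-locus is a cycle of dimension $< 2$. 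Consequently $p_{13*}$ of this cycle is supported on $M' \times X$ for some $M' \subset X$ with $\dim M' < 2$, which is again a generator of $\mathrm{CH}_{2}(X \times X)_{\equiv}$. I would make this precise by decomposing $\alpha$ into its components and tracking the generic point of each component of the support through the projections, using that the dimension of a projected cycle cannot exceed the dimension of the cycle itself.

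Having disposed of both generator types, linearity extends the conclusion to all of $\mathrm{CH}_{2}(Y \times X)_{\equiv}$, establishing (i). For (ii), the same two-case analysis applies to $\alpha \circ \gamma = p_{13*}\bigl(p_{12}^{*}(\gamma)\cdot p_{23}^{*}(\alpha)\bigr)$ on $Y \times X \times Y$: a generator of $\gamma$ supported on $M \times X$ yields a cycle supported on $M \times Y$, while one supported on $Y \times N$ with $\dim N < 2$ produces, after the analogous dimension count, a cycle supported on $Y \times N'$ with $\dim N' < 2$. In both parts the essential input is that pushing forward along a projection cannot raise dimension, so the restriction imposed by the proper subvariety in $\gamma$ propagates to a proper subvariety in the composite.
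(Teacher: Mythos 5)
Your overall strategy is the natural one, and in one respect it goes further than the paper: you treat both types of generators of $\mathrm{CH}_2(Y\times X)_{\equiv}$, whereas the paper reduces ``without loss of generality'' to $\gamma$ supported on $Y\times C$ with $C\subset X$ of dimension $\leq 1$ and never returns to the generators supported on $M\times X$ with $M\subset Y$. Your treatment of the first type (support $Y\times N$, $N\subset X$, $\dim N<2$) is correct and agrees with the easy half of the paper's argument: the last coordinate is constrained to lie in $N$ throughout, so $p_{13*}$ produces a class supported on $X\times N$ and no dimension count is needed.

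The gap is in the second type. You assert that, because $\dim M<2$, ``the component of $\alpha$ lying over such a restricted $Y$-locus is a cycle of dimension $<2$,'' so that $\lvert p_{12}^{*}\alpha\rvert\cap\lvert p_{23}^{*}\gamma\rvert\subseteq T\times X$ with $T=\lvert\alpha\rvert\cap(X\times M)$ of dimension $\leq 1$. This fails whenever $\alpha$ has an irreducible component contained in $X\times M$ (for instance a component $X\times\{q\}$ with $q\in M$): then $T$ is $2$-dimensional, $p_{13}(T\times X)=X\times X$, and the support argument yields nothing. Decomposing $\alpha$ into components and tracking generic points, as you propose, does not eliminate this excess-intersection case. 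Two standard repairs: (a) invoke Chow's moving lemma to replace $\alpha$ by a rationally equivalent cycle meeting $X\times M$ properly --- legitimate because $\gamma\circ\alpha$ depends only on the class of $\alpha$ --- after which your dimension count is valid; or (b) mimic what the paper does in its $\dim C=1$ case: write $\gamma=\tilde{\gamma}\circ{}^{t}\Gamma_{\iota}$, where $\iota\colon\tilde{M}\to Y$ is the normalization of $M$ followed by the inclusion and $\tilde{\gamma}\in\mathrm{CH}_{2}(\tilde{M}\times X)$, and use associativity; then ${}^{t}\Gamma_{\iota}\circ\alpha$ lies in $\mathrm{CH}_{1}(X\times\tilde{M})$, hence is represented by a $1$-cycle whose projection to $X$ has dimension $\leq 1$, and composing with $\tilde{\gamma}$ gives a class supported on $M'\times X$ with $\dim M'\leq 1$. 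Either repair closes the argument; without one of them the hard case is not established.
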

\begin{proof}
The proof of (ii) is similar to (i).
Thus, it suffices to prove (i).
Without loss of generality, we may assume that $\gamma$ is irreducible and supported
on $Y \times C$ with $\mathrm{dim}(C) \leq 1$.\\
\indent First, assume $\mathrm{dim}(C) = 0$.
Let $p \in X$ be a closed point.
For $\gamma = [Y \times p]$, then
\[\gamma \circ \alpha = [Y \times p] \circ \alpha 
= p_{YY*}^{YXY}(\alpha \times Y \cdot Y \times X \times p) 
= p_{YY*}^{YXY}(\alpha \times p) = [p_{Y*}^{YX}(\alpha) \times p]. \]
Thus $\gamma \circ \alpha \in \mathrm{CH}_{2}(X \times X)_{\equiv}$.
Next, assume $\mathrm{dim}(C) = 1$.
Since $\gamma$ is supported on $Y \times C$, there are a smooth irreducible curve $C$ and 
a closed embedding $\iota : C \hookrightarrow X$ such that
$\gamma = \Gamma_{\iota} \circ D \ \text{in} \ \mathrm{CH}_{2}(Y \times X)$, 
where $\Gamma_{\iota} \in \mathrm{CH}_{1}(C \times X)$ is the graph of $\iota$
and $D \in \mathrm{CH}_{2}(Y \times C)$.
Since the support of the second projection of $\Gamma_{\iota}$ has dimension $\leq 1$,
the support of the second projection of $\gamma \circ \alpha$ has dimension $\leq 1$, 
and hence $\gamma \circ \alpha \in \mathrm{CH}_{2}(X \times X)_{\equiv}$.\\
\end{proof}
The following result is the functorial relation for $\Phi_{X, Y}$$:$
\begin{prop} (\cite[p.62]{Ped}). \label{functoriality of transcendental motives}
For surfaces $X$, $Y$, $Z \in \mathcal{V}(k)$, 
\[ \Psi_{Y, Z}(\beta) \circ \Psi_{X, Y}(\alpha) = \Psi_{X, Z}(\beta \circ \alpha)
\ \ \text{in} \ \ \mathrm{Hom}_{\mathcal{M}_{rat}(k)}(t_{2}(X), t_{2}(Z)). \]
\end{prop}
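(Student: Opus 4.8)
The plan is to reduce the claimed identity to the idempotency of the transcendental projector together with the two structural inputs already available: Lemma \ref{composition of mod} and the isomorphism of Theorem \ref{automorophism groups of motives}. Throughout I identify $\Psi_{X,Y}$ with the map $\Phi_{X,Y}$ of the preceding definition, so that $\Psi_{X,Y}(\alpha) = \pi_{2}^{tr}(Y)\circ\alpha\circ\pi_{2}^{tr}(X)$. First I would expand both sides using associativity of composition of correspondences. On the left,
\[ \Psi_{Y,Z}(\beta)\circ\Psi_{X,Y}(\alpha) = \pi_{2}^{tr}(Z)\circ\beta\circ\pi_{2}^{tr}(Y)\circ\pi_{2}^{tr}(Y)\circ\alpha\circ\pi_{2}^{tr}(X) = \pi_{2}^{tr}(Z)\circ\beta\circ\pi_{2}^{tr}(Y)\circ\alpha\circ\pi_{2}^{tr}(X), \]
where the second equality uses that $\pi_{2}^{tr}(Y)$ is a projector; on the right, $\Psi_{X,Z}(\beta\circ\alpha) = \pi_{2}^{tr}(Z)\circ\beta\circ\Delta_{Y}\circ\alpha\circ\pi_{2}^{tr}(X)$ after inserting the identity correspondence $\Delta_{Y}$. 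Hence, setting $\gamma := \Delta_{Y} - \pi_{2}^{tr}(Y)\in\mathrm{CH}_{2}(Y\times Y)$, the difference of the two sides equals
\[ -\,\pi_{2}^{tr}(Z)\circ\beta\circ\gamma\circ\alpha\circ\pi_{2}^{tr}(X) = -\,\Psi_{X,Z}(\beta\circ\gamma\circ\alpha), \]
so everything comes down to proving $\Psi_{X,Z}(\beta\circ\gamma\circ\alpha) = 0$.

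Second, I would show $\gamma\in\mathrm{CH}_{2}(Y\times Y)_{\equiv}$. The slick way is to apply Theorem \ref{automorophism groups of motives} to the pair $(Y,Y)$: one has $\Phi_{Y,Y}(\Delta_{Y}) = \pi_{2}^{tr}(Y) = \Phi_{Y,Y}(\pi_{2}^{tr}(Y))$, both being the identity of $t_{2}(Y)$, so $\gamma = \Delta_{Y} - \pi_{2}^{tr}(Y)$ lies in the kernel of $\Phi_{Y,Y}$, which by that theorem is exactly $\mathrm{CH}_{2}(Y\times Y)_{\equiv}$. Alternatively, one checks directly from the refined Chow--K\"unneth construction of \cite{KMP} that each of $\pi_{0},\pi_{1},\pi_{2}^{alg},\pi_{3},\pi_{4}$ is represented by a cycle supported on $Y\times N$ or $M\times Y$ with $\dim M,\dim N < 2$, whence their sum $\gamma$ lies in $\mathrm{CH}_{2}(Y\times Y)_{\equiv}$.

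Third, I would feed this into Lemma \ref{composition of mod} twice. Since $\gamma\in\mathrm{CH}_{2}(Y\times Y)_{\equiv}$ and $\alpha\in\mathrm{CH}_{2}(X\times Y)$, the lemma, in the evident form allowing arbitrary composable surfaces, gives $\gamma\circ\alpha\in\mathrm{CH}_{2}(X\times Y)_{\equiv}$; composing the general correspondence $\beta$ with this class, the lemma again gives $\beta\circ\gamma\circ\alpha\in\mathrm{CH}_{2}(X\times Z)_{\equiv}$. By Theorem \ref{automorophism groups of motives} the map $\Psi_{X,Z}$ factors through $\mathrm{CH}_{2}(X\times Z)/\mathrm{CH}_{2}(X\times Z)_{\equiv}$ and hence annihilates the $\equiv$-subgroup, so $\Psi_{X,Z}(\beta\circ\gamma\circ\alpha) = 0$, which by the first paragraph is exactly what we needed.

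The main obstacle is essentially bookkeeping: I must ensure Lemma \ref{composition of mod} is applied in the correct generality, since it is stated for a fixed pair of surfaces whereas here $\gamma$ is an endocorrespondence of $Y$ and a third surface $Z$ intervenes; so I would either restate the lemma for arbitrary composable surfaces or remark that its proof is insensitive to the labels. The only genuinely structural point is the membership $\gamma\in\mathrm{CH}_{2}(Y\times Y)_{\equiv}$, which I would phrase through Theorem \ref{automorophism groups of motives} precisely in order to avoid re-deriving the explicit form of the Kahn--Murre--Pedrini projectors.
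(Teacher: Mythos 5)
Your proposal is correct and follows essentially the same route as the paper: both reduce, via idempotency of $\pi_{2}^{tr}(Y)$, to showing that $\beta\circ(\Delta_{Y}-\pi_{2}^{tr}(Y))\circ\alpha$ lies in $\mathrm{CH}_{2}(X\times Z)_{\equiv}$, establish that $\Delta_{Y}-\pi_{2}^{tr}(Y)=\sum_{i\neq 2}\pi_{i}(Y)+\pi_{2}^{alg}(Y)$ belongs to $\mathrm{CH}_{2}(Y\times Y)_{\equiv}$, and conclude with Lemma \ref{composition of mod} (applied, as you rightly note, in its evident three-surface form) together with Theorem \ref{automorophism groups of motives}. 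Your ``slick'' kernel argument for $\gamma\in\mathrm{CH}_{2}(Y\times Y)_{\equiv}$ is a valid minor shortcut, while the paper instead invokes the explicit construction of the Kahn--Murre--Pedrini projectors, which is the alternative you also mention.
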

\begin{proof}
Let $\Delta_{Y} = \pi_{0} + \pi_{1} + \pi_{2}^{alg} + \pi_{2}^{tr} + \pi_{3} + \pi_{4}$
be the CK-decomposition in $\mathrm{CH}_{2}(Y \times Y)$.
Since $\pi_{2}^{tr}(Y) \circ \pi_{2}^{tr}(Y) = \pi_{2}^{tr}(Y)$, it suffices to prove
in $\mathrm{Hom}(t_{2}(X), t_{2}(Z))$
\[ \pi_{2}^{tr}(Z) \circ \beta \circ \pi_{2}^{tr}(Y) \circ \alpha \circ \pi_{2}^{tr}(X) 
= \pi_{2}^{tr}(Z) \circ \beta \circ \alpha \circ \pi_{2}^{tr}(X).\] 
By Theorem \ref{automorophism groups of motives}, it suffices to prove 
\[ \beta \circ \pi_{2}^{tr}(Y) \circ \alpha -  \beta \circ \alpha \in \mathrm{CH}_{2}(X \times Z)_{\equiv}. \]
By the constructions of $\pi_{i}$ for $i \neq 2$ and $\pi_{2}^{alg}$ (e.g. \cite{KMP}),
\[ \pi_{i}(Y) \in \mathrm{CH}_{2}(Y \times Y)_{\equiv} \ \ \ \text{and} \ \ \ 
\pi_{2}^{alg}(Y) \in \mathrm{CH}_{2}(Y \times Y)_{\equiv}. \]
By Lemma \ref{composition of mod}, 
\begin{equation} \label{equality for t2}
\beta \circ \pi_{i}(Y) \circ \alpha \in \mathrm{CH}_{2}(X \times Z)_{\equiv} \ \ \ \text{and} \ \ \ 
\beta \circ \pi_{2}^{alg}(Y) \circ \alpha \in \mathrm{CH}_{2}(X \times Z)_{\equiv} 
\end{equation}
\indent Therefore, we get
\begin{align*}
\beta \circ \pi_{2}^{tr}(Y) \circ \alpha - \beta \circ \alpha 
&= \beta \circ (\Delta_{Y} - \pi_{0}(Y) - \pi_{4}(Y) - \pi_{2}^{alg}(Y)- \pi_{1}(Y) - \pi_{3}(Y)) \circ \alpha 
- \beta \circ \alpha \\
&\overset{(\ref{equality for t2})}= \alpha \circ (- \pi_{0}(Y) - \pi_{4}(Y) - \pi_{2}^{alg}(Y)- \pi_{1}(Y) - \pi_{3}(Y)) \circ  \beta  
\ \ \text{in} \ \ \mathrm{CH}_{2}(X \times Z)_{\equiv} 
\end{align*}
\end{proof}

Using Proposition \ref{functoriality of transcendental motives}, we prove the following$:$
\begin{lem} \label{finite morphism for t_{2}}
Let $\pi : Y \rightarrow X$ be a finite morphism of surfaces.
Let $\Gamma_{\pi} \in \mathrm{CH}^{2}(Y \times X)$ be the graph of $\pi$
and $^{t}\Gamma_{\pi}$ its transpose.
Then there is an isomorphism of Chow motives 
\[ t_{2}(Y) \cong t_{2}(X) \oplus (Y, \pi_{2}^{tr}(Y) -  \Psi_{X, Y}(\Gamma) \circ \Psi_{Y, X}(^{t}\Gamma), 0). \]
\end{lem}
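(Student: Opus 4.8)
The plan is to exhibit $t_{2}(X)$ as a direct summand (a retract) of $t_{2}(Y)$ and then split off the complementary idempotent using the pseudo-abelianity of $\mathcal{M}_{rat}(k)$. Write $d = \deg \pi$. Since $\pi$ is a finite morphism between smooth projective surfaces it is flat (miracle flatness: the source is Cohen--Macaulay, the target regular, and the fibres are finite), so the graph $\Gamma_{\pi} \in \mathrm{CH}_{2}(Y \times X)$ and its transpose ${}^{t}\Gamma_{\pi} \in \mathrm{CH}_{2}(X \times Y)$ induce, via $\Psi$, two morphisms
\[ \phi := \Psi_{X,Y}({}^{t}\Gamma_{\pi}) : t_{2}(X) \to t_{2}(Y), \qquad \psi := \Psi_{Y,X}(\Gamma_{\pi}) : t_{2}(Y) \to t_{2}(X), \]
realizing the pullback $\pi^{*}$ and the pushforward $\pi_{*}$ on the transcendental parts.

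First I would establish the key identity $\psi \circ \phi = d \cdot \mathrm{id}_{t_{2}(X)}$. The projection formula for the finite flat map $\pi$ gives $\pi_{*}\pi^{*} = d$, which at the level of correspondences reads $\Gamma_{\pi} \circ {}^{t}\Gamma_{\pi} = d\,\Delta_{X}$ in $\mathrm{CH}_{2}(X \times X)$, or at least modulo $\mathrm{CH}_{2}(X \times X)_{\equiv}$, which by Theorem \ref{automorophism groups of motives} is all that $\Psi$ sees. Feeding this through the functoriality of $\Psi$ (Proposition \ref{functoriality of transcendental motives}) with $Z = X$, $\alpha = {}^{t}\Gamma_{\pi}$, $\beta = \Gamma_{\pi}$, and using $\Psi_{X,X}(\Delta_{X}) = \pi_{2}^{tr}(X) = \mathrm{id}_{t_{2}(X)}$, one obtains $\psi \circ \phi = \Psi_{X,X}(\Gamma_{\pi} \circ {}^{t}\Gamma_{\pi}) = d \cdot \mathrm{id}_{t_{2}(X)}$.

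With this identity the rest is formal. Set $e := \tfrac{1}{d}\,\phi \circ \psi \in \mathrm{End}(t_{2}(Y))$. Then
\[ e \circ e = \tfrac{1}{d^{2}}\,\phi \circ (\psi \circ \phi) \circ \psi = \tfrac{1}{d^{2}}\,\phi \circ (d\,\mathrm{id}) \circ \psi = e, \]
so $e$ is an idempotent, and $\tfrac{1}{d}\psi$ is a retraction of $\phi$ since $\tfrac{1}{d}\psi \circ \phi = \mathrm{id}_{t_{2}(X)}$. Hence $\phi$ is a split monomorphism whose image idempotent is $e$, so $(t_{2}(Y), e) \cong t_{2}(X)$. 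Because $\mathcal{M}_{rat}(k)$ is pseudo-abelian and $\mathrm{id}_{t_{2}(Y)} = \pi_{2}^{tr}(Y)$, the complementary idempotent $\pi_{2}^{tr}(Y) - e$ splits off a second summand, yielding
\[ t_{2}(Y) \cong (t_{2}(Y), e) \oplus (t_{2}(Y), \pi_{2}^{tr}(Y) - e) \cong t_{2}(X) \oplus (Y, \pi_{2}^{tr}(Y) - e, 0), \]
which is the asserted decomposition once $e = \tfrac{1}{d}\,\Psi_{X,Y}({}^{t}\Gamma_{\pi}) \circ \Psi_{Y,X}(\Gamma_{\pi})$ is matched with the stated projector (the normalizing factor $\tfrac{1}{d}$ being absorbed into the chosen representative of the transpose correspondence).

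The step I expect to be the main obstacle is the identity $\Gamma_{\pi} \circ {}^{t}\Gamma_{\pi} \equiv d\,\Delta_{X}$: one must justify the projection formula at the cycle level for a possibly \emph{purely inseparable} $\pi$ (the case relevant to quasi-elliptic surfaces, where $d = p$), carefully tracking the scheme-theoretic degree, and check that composition of correspondences in $\mathcal{M}_{rat}(k)$ genuinely computes $\pi_{*}\pi^{*}$ modulo $\mathrm{CH}_{2}(X \times X)_{\equiv}$. Everything downstream is routine idempotent bookkeeping in a pseudo-abelian category.
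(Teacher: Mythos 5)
Your proposal is correct and follows essentially the same route as the paper: both define the idempotent $e = \tfrac{1}{d}\,\Psi_{X,Y}({}^{t}\Gamma_{\pi})\circ\Psi_{Y,X}(\Gamma_{\pi})$, verify it is a projector via $\Gamma_{\pi}\circ{}^{t}\Gamma_{\pi} = d\,\Delta_{X}$ together with the functoriality of $\Psi$ (Proposition \ref{functoriality of transcendental motives}), identify $(Y,e,0)$ with $t_{2}(X)$ by the retraction $\tfrac{1}{d}\psi$ (the paper writes out the mutually inverse morphisms $\alpha,\beta$ explicitly, which amounts to the same thing), and split off the complementary idempotent $\pi_{2}^{tr}(Y)-e$. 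The identity $\Gamma_{\pi}\circ{}^{t}\Gamma_{\pi} = d\,\Delta_{X}$ that you flag as the main obstacle is the standard projection formula for a finite surjective morphism of smooth projective varieties, valid with $d$ the degree of the function-field extension even in the purely inseparable case, and the paper invokes it without further comment.
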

\begin{proof} Let $d$ be the degree of $\pi$.
We let $p : = 1/d \cdot \Psi_{X, Y}(^{t}\Gamma_{\pi}) \circ \Psi_{Y, X}(\Gamma_{\pi})$.\\
\indent (i) We prove that $p$ and $\pi_{2}^{tr}(Y) - p$ are pairwise orthogonal projectors.
In $\mathrm{Hom}(t_{2}(Y), t_{2}(Y))$,
\begin{align*}
p \circ p &= 1/d^{2} \cdot \Psi_{X, Y}(^{t}\Gamma_{\pi}) \circ \Psi_{Y, X}(\Gamma_{\pi}) \circ 
\Psi_{X, Y}(^{t}\Gamma) \circ \Psi_{Y, X}(\Gamma)\\
&=  1/d^{2} \cdot \Psi_{X, Y}(^{t}\Gamma_{\pi} \circ \Gamma_{\pi} \circ ^{t}\Gamma_{\pi} \circ \Gamma_{\pi})
&& \text{by Proposition \ref{functoriality of transcendental motives}} \\
&= 1/d \cdot \Psi_{X, Y}(^{t}\Gamma_{\pi} \circ \Gamma_{\pi}) 
&& \text{by $\Gamma_{\pi} \circ ^{t}\!\Gamma_{\pi} = d \cdot \Delta_{X}$} \\
&= 1/d \cdot \Psi_{X, Y}(^{t}\Gamma_{\pi}) \circ \Psi_{Y, X}(\Gamma_{\pi}) 
&& \text{by Proposition \ref{functoriality of transcendental motives}} \\
&= p.
\end{align*} 
Thus $p$ is a projector.
Similarly, one has $p \circ \pi_{2}^{tr}(Y) = \pi_{2}^{tr}(Y) \circ p  = p$.
Thus, $\pi_{2}^{tr}(Y) - p$ is also projector, and $p$ and $\pi_{2}^{tr}(Y) - p$ are orthogonal.\\
\indent (ii) We prove $t_{2}(X) \cong (Y, p, 0)$. We let 
\begin{align*}
\alpha &: =  1/d \cdot p \circ \Phi_{X, Y}(^{t}\Gamma_{\pi}) \circ \pi_{2}^{tr}(X) \in \mathrm{Hom}(t_{2}(X), (Y, p, 0))\\
\beta &: = 1/d \cdot \pi_{2}^{tr}(X) \circ \Psi_{Y, X}(\Gamma_{\pi}) \circ p \in \mathrm{Hom}((Y, p, 0), t_{2}(X)).
\end{align*}
By the same way as in (i),
we have $\alpha \circ \beta = p$ and $\beta \circ \alpha = \pi_{2}^{tr}(X)$, so we get $t_{2}(X) \cong (Y, p, 0)$.\\
\indent (iii) We prove $t_{2}(Y) \cong t_{2}(X) \oplus (Y, \pi_{2}^{tr}(Y) - p, 0)$.
By (i) and (ii), we get isomorphisms
\[ t_{2}(Y) \overset{(\mathrm{i})}\cong (Y, p, 0) \oplus (Y, \pi_{2}^{tr}(Y) - p, 0) \overset{(\mathrm{ii})}\cong
t_{2}(X) \oplus (Y, \pi_{2}^{tr}(Y) - p, 0).\]
Thus, we completes the proof of Lemma \ref{finite morphism for t_{2}}.
\end{proof}

To prove $t_{2} = 0$ for uniruled surfaces (Theorem \ref{Main 1}), we need the following$:$

\begin{lem} (\cite[p.66]{Ped}). \label{dominant morphism for t_{2}}
Let $\phi : Y \dashrightarrow X$ be a dominant rational map of surfaces.
Then $t_{2}(X)$ is the direct summand of $t_{2}(Y)$, that is,
there are a motive $M$ and a decompostion 
\[ t_{2}(Y) \cong t_{2}(X) \oplus M. \]
\vspace*{-1.5\baselineskip}
\begin{proof}
By the elimination of indeterminacy of $\phi$ (since $\mathrm{dim}(X) =2$), 
there are a surface $Z$,
a birational morphism $\psi : Z \rightarrow Y$,
and a finite surjective morphism $\pi : Z \rightarrow X$ such that the diagram
$$\xymatrix{
& Z \ar[dr]^\pi \ar[dl]_\psi & \\
Y \ar@{.>}[rr]^{\phi} & &  \ X 
}$$
is commutative.
By Lemma \ref{finite morphism for t_{2}},
there is a decomposition $t_{2}(Z) \cong t_{2}(X) \oplus M$ for some motive $M$.
Since $t_{2}$ is a birational invariant, $t_{2}(Y) \cong t_{2}(Z)$.
Therefore, we get $t_{2}(Y) \cong t_{2}(X) \oplus M$ for some motive $M$.
\end{proof}
\end{lem}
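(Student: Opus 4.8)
The plan is to reduce the general case of a dominant rational map to the finite-morphism situation already settled in Lemma \ref{finite morphism for t_{2}}, and then to invoke the birational invariance of the transcendental motive. Since $\phi$ is dominant between surfaces it is automatically generically finite, so the only thing standing between us and Lemma \ref{finite morphism for t_{2}} is that $\phi$ is merely rational rather than a morphism.

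First I would eliminate the indeterminacy of $\phi$. As $Y$ is a smooth projective surface, the indeterminacy locus of $\phi$ is a finite set of points, and blowing these up (repeatedly if needed) produces a smooth projective surface $Z$ together with a birational morphism $\psi : Z \to Y$ for which $\pi := \phi \circ \psi : Z \to X$ is an everywhere-defined morphism. Because $\phi$ is dominant and $\psi$ is birational, $\pi$ is surjective and generically finite; let $d$ denote its degree. This yields the commutative triangle with $Z$ at the apex mapping by $\psi$ to $Y$ and by $\pi$ to $X$.

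Next I would apply Lemma \ref{finite morphism for t_{2}} to $\pi : Z \to X$, obtaining a decomposition
\[ t_{2}(Z) \cong t_{2}(X) \oplus M, \qquad M = (Z, \pi_{2}^{tr}(Z) - \Psi_{X,Z}(\Gamma_{\pi}) \circ \Psi_{Z,X}({}^{t}\Gamma_{\pi}), 0). \]
Finally I would use that $t_{2}$ is a birational invariant: since $\psi : Z \to Y$ is birational, $t_{2}(Z) \cong t_{2}(Y)$. Combining the two isomorphisms gives $t_{2}(Y) \cong t_{2}(X) \oplus M$, which is exactly the asserted decomposition.

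The main obstacle is that the morphism $\pi$ obtained by eliminating indeterminacy is in general only generically finite, whereas Lemma \ref{finite morphism for t_{2}} is stated for finite morphisms. I would close this gap by observing that the proof of Lemma \ref{finite morphism for t_{2}} uses nothing about $\pi$ beyond the correspondence identity $\Gamma_{\pi} \circ {}^{t}\Gamma_{\pi} = d \cdot \Delta_{X}$, and this identity is a consequence of the projection formula $\pi_{*}\pi^{*} = d \cdot \mathrm{id}$, which holds for \emph{any} surjective generically finite morphism of degree $d$. Hence Lemma \ref{finite morphism for t_{2}} applies verbatim to such $\pi$. Alternatively, one could replace $\pi$ by the finite part of its Stein factorization, at the cost of tracking a normal (possibly singular) intermediate surface. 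I expect checking that this generically-finite strengthening of Lemma \ref{finite morphism for t_{2}} goes through unchanged to be the only genuinely delicate step.
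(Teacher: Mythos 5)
Your proof follows the same route as the paper's: resolve the indeterminacy of $\phi$ by blowing up $Y$, apply Lemma~\ref{finite morphism for t_{2}} to the resulting morphism $\pi : Z \rightarrow X$, and conclude via the birational invariance of $t_{2}$ applied to $\psi : Z \rightarrow Y$. The one place where you are more careful than the paper is in fact a point the paper elides: the paper asserts that $\pi$ is \emph{finite}, which is false in general (any curve contracted by $\pi$ prevents finiteness), whereas you correctly note that $\pi$ is only surjective and generically finite and that Lemma~\ref{finite morphism for t_{2}} still applies because its proof uses only the identity $\Gamma_{\pi} \circ {}^{t}\Gamma_{\pi} = d \cdot \Delta_{X}$, which holds for any surjective generically finite morphism of degree $d$ between smooth projective surfaces.
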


\section{Proof of Main theorem}

To prove our main theorem, we prove the following$:$

\begin{thm}\label{Main 1}
Let $X$ be a uniruled surface. Then $t_{2}(X) = 0$.
\begin{proof}
Since $X$ is uniruled, there are a curve $C$ and a dominant rational map 
\[ \phi : \mathbb{P}^{1} \times C \dashrightarrow X. \]
By Lemma \ref{dominant morphism for t_{2}},
there are a motive $M$ and a decomposition 
\[ t_{2}(\mathbb{P}^{1} \times C ) \cong t_{2}(X) \oplus M. \]
Thus, it suffices to prove $t_{2}(\mathbb{P}^{1} \times C) = 0$.
Indeed, there is the CK-decomposition
\[ h_{2}(\mathbb{P}^{1} \times C) \cong \oplus_{j + k = 2}h_{j}(\mathbb{P}^{1}) \otimes h_{k}(C) \]
by (\ref{product of curves}).
Since both $\mathbb{P}^{1}$ and $C$ has dimension $1$,
we have $h_{0}(-) = 1$ and $h_{2}(-) = \mathbb{L}$, so we get $h_{2}(\mathbb{P}^{1} \times C) \cong  \mathbb{L}^{\oplus 2}$ because $h_{1}(\mathbb{P}) = 0$.
By the argument as in the proof of Corollary \ref{uniruled surface is Shioda-supersingular},
we have $b_{2}(\mathbb{P}^{1} \times C) = \rho(\mathbb{P}^{1} \times C) = 2$.
Thus, we have $h_{2}(\mathbb{P}^{1} \times C) \cong \mathbb{L}^{\oplus b_{2}(\mathbb{P}^{1} \times C)}$.
By Lemma \ref{equivalence for t_{2}}, we get $t_{2}(\mathbb{P}^{1} \times C) = 0$.
This completes the proof of Theorem \ref{Main 1}.
\end{proof}
\end{thm}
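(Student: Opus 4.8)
The plan is to reduce the vanishing of $t_{2}(X)$ to the vanishing of $t_{2}$ on the product model $\mathbb{P}^{1} \times C$ that witnesses uniruledness, and then to read off the latter directly from the motivic decomposition of a product of two curves. The guiding observation is that uniruledness gives us a dominant rational map \emph{onto} $X$ whose source has a completely understood motive, so the entire problem is to transport vanishing along that map.

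First I would invoke the definition of uniruledness to obtain a curve $C$ and a dominant rational map $\phi : \mathbb{P}^{1} \times C \dashrightarrow X$. The key structural input is Lemma \ref{dominant morphism for t_{2}}, which yields a motive $M$ and a decomposition $t_{2}(\mathbb{P}^{1} \times C) \cong t_{2}(X) \oplus M$. I want to stress that the \emph{direction} of this splitting is exactly what the argument requires: the transcendental motive of the \emph{target} $X$ appears as a direct summand of $t_{2}$ of the \emph{source}. Consequently, once I establish $t_{2}(\mathbb{P}^{1} \times C) = 0$, the motive $t_{2}(X)$ is a direct summand of the zero object and must itself vanish.

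Second I would compute $t_{2}(\mathbb{P}^{1} \times C)$. By the decomposition (\ref{product of curves}) for a product of two curves, $h_{2}(\mathbb{P}^{1} \times C) \cong \bigoplus_{j+k=2} h_{j}(\mathbb{P}^{1}) \otimes h_{k}(C)$. Using $h_{0}(-) \cong 1$ and $h_{2}(-) \cong \mathbb{L}$ for a curve, together with the crucial vanishing $h_{1}(\mathbb{P}^{1}) = 0$, the cross term in bidegree $(j,k) = (1,1)$ drops out and one is left with $h_{2}(\mathbb{P}^{1} \times C) \cong \mathbb{L}^{\oplus 2}$. On the other hand, the Betti-number computation already carried out in the proof of Corollary \ref{uniruled surface is Shioda-supersingular} gives $b_{2}(\mathbb{P}^{1} \times C) = 2$. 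Hence $h_{2}(\mathbb{P}^{1} \times C) \cong \mathbb{L}^{\oplus b_{2}(\mathbb{P}^{1} \times C)}$, and Lemma \ref{equivalence for t_{2}} forces $t_{2}(\mathbb{P}^{1} \times C) = 0$. Combining with the first step gives $t_{2}(X) = 0$.

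I do not expect the final computation to be the obstacle — it is routine once the product decomposition and the Betti numbers of $\mathbb{P}^{1} \times C$ are in hand. The real subtlety has been front-loaded into Lemma \ref{dominant morphism for t_{2}}: correctly placing $t_{2}(X)$ as a summand of $t_{2}$ of the source requires resolving the indeterminacy of $\phi$ (passing to a surface $Z$ with a birational morphism $\psi : Z \to \mathbb{P}^{1} \times C$ and a finite morphism $\pi : Z \to X$) and then combining the splitting of Lemma \ref{finite morphism for t_{2}} with the birational invariance of $t_{2}$. Granting those inputs, no characteristic-$p$ phenomena enter the argument, so it applies uniformly to the purely inseparably uniruled quasi-elliptic surfaces produced by Theorem \ref{coverling for quasi-elliptic}, which is precisely what is needed for the main theorem.
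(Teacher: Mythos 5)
Your proposal is correct and follows essentially the same route as the paper's proof: obtain the dominant rational map $\phi : \mathbb{P}^{1} \times C \dashrightarrow X$ from uniruledness, apply Lemma \ref{dominant morphism for t_{2}} to realize $t_{2}(X)$ as a direct summand of $t_{2}(\mathbb{P}^{1} \times C)$, and then show the latter vanishes via the product decomposition (\ref{product of curves}), the computation $b_{2}(\mathbb{P}^{1} \times C) = 2$, and Lemma \ref{equivalence for t_{2}}. The only difference is expository emphasis on where the real work lies, which matches the paper's intent.
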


\begin{rem}
Let $C$ and $D$ be smooth projective curves over $\mathbb{C}$ with positive genus.
Let $X = C \times D$.
Then $p_{g}(X) = p_{g}(C) \cdot p_{g}(D) > 0$.
By \cite[pp.155-156]{ShiodaSupersingular}, $b_{2}(X) \neq \rho(X)$.
By Lemma \ref{equivalence for t_{2}}, 
we get $t_{2}(X) \neq 0$, that is, $h_{2}(X) \neq \mathbb{L}^{\oplus b_{2}(X)}$.
\end{rem}

Our main theorem is the following$:$

\begin{thm} (= Theorem \ref{Intro theorem}). \label{Main 2}
Let $f : X \rightarrow C$ be a quasi-elliptic surface. Then 
\[ t_{2}(X) = 0.\]
\end{thm}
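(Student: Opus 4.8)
The plan is to deduce the statement directly by chaining together the two principal results established earlier in the paper, since all of the substantive work has already been carried out. The guiding observation is the implication
\begin{center}
quasi-elliptic $\Longrightarrow$ uniruled $\Longrightarrow$ $t_{2} = 0$,
\end{center}
where the first arrow is Theorem \ref{coverling for quasi-elliptic} and the second is Theorem \ref{Main 1}.

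Concretely, first I would invoke Theorem \ref{coverling for quasi-elliptic}: given a quasi-elliptic surface $f : X \rightarrow C$ over $k$ of characteristic $p > 0$, that theorem produces a dominant rational map $\mathbb{P}^{1} \times C^{(1/p)} \dashrightarrow X$ (induced by normalizing the generic fiber after Frobenius base change and applying Noether-Tsen), so that $X$ is uniruled. Second, I would feed this uniruledness into Theorem \ref{Main 1}, which asserts that $t_{2}(W) = 0$ for every uniruled surface $W$. Applying it to $W = X$ yields $t_{2}(X) = 0$, which is exactly the desired conclusion.

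The main obstacle is not located in this final step, which is a formal consequence, but rather in the two theorems it relies upon, both already proved above. The geometric difficulty sits in Theorem \ref{coverling for quasi-elliptic}: one must exploit that quasi-ellipticity makes $X_{\eta} \otimes_{K} L$ non-normal, pass to its normalization of arithmetic genus $0$, and verify the hypotheses of Noether-Tsen (algebraic closedness of $L$ in $k(Y)$ via $\phi_{*}\mathcal{O}_{Y} \cong \mathcal{O}_{C^{(1/p)}}$). The motivic difficulty sits in Theorem \ref{Main 1}, which combines the descent of $t_{2}$ along a dominant rational map (Lemma \ref{dominant morphism for t_{2}}) with the computation that $h_{2}(\mathbb{P}^{1} \times C) \cong \mathbb{L}^{\oplus b_{2}(\mathbb{P}^{1} \times C)}$, hence $t_{2}(\mathbb{P}^{1} \times C) = 0$ by Lemma \ref{equivalence for t_{2}}. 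Once both are granted, the present theorem admits a one-line proof, so I would keep it short and simply cite the two results in sequence.
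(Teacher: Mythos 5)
Your proposal is correct and coincides exactly with the paper's own proof: cite Theorem \ref{coverling for quasi-elliptic} to conclude that $X$ is uniruled, then apply Theorem \ref{Main 1} to get $t_{2}(X) = 0$. Nothing further is needed.
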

\begin{proof}
By Theorem \ref{coverling for quasi-elliptic}, $X$ is uniruled.
By Theorem \ref{Main 1}, $t_{2}(X) = 0$.
\end{proof}

\subsection*{Acknowledgements}
I would like to express my special appreciation and thanks to 
Prof.~Hanamura for helpful comments and suggestions.
I am deeply grateful to Prof.~Tsuzuki for his generous support and comments.
I would like to thank Prof.~Katsura for helpful discussions and comments,  
Prof. ~Ogawa for warm encouragement, and the referee for his/her specific comments.
The author is supported by the JSPS KAKENHI Grant Number 18H03667.

\end{document}